\def\A{\mathcal{A}}
\def\B{\mathcal B}
\def\B{\mathcal B}
\def\amslatex{$\mathcal{A}\kern-.1667em\lower.5ex\hbox{$\mathcal{M}$}\kern-.125em\mathcal{S}$-\LaTeX}
\newtheorem{set}{set}[section]
\newtheorem{Corollary}[set]{Corollary}
\newtheorem{Definition}[set]{Definition}
\newtheorem{Lemma}[set]{Lemma}
\newtheorem{Proposition}[set]{Proposition}
\newtheorem{Remark}[set]{Remark}
\newtheorem{Theorem}[set]{Theorem}
\newcommand{\define}{\mathrel{\hbox{$\equiv$\hskip -.90em \lower .47ex \hbox{$\leftharpoondown$}}}}
\newcommand{\enifed}{\mathrel{\hbox{$\equiv$\hskip -.90em \lower .47ex \hbox{$\rightharpoondown$}}}}
\begin{document}
\title[Poisson Processes in Free Probability] {Poisson Processes in Free Probability}
\author{Guimei An}
\address{School of Mathematical Sciences and LPMC \\
Nankai University, Tianjin 300071, China} \email[Guimei
An]{angm@nankai.edu.cn}

\author{Mingchu Gao}
\address{Department of Mathematics\\
Louisiana College, Pineville, LA 71359, USA} \email[Mingchu
Gao]{mingchu.gao@lacollege.edu}

\maketitle
\begin{abstract}

 We prove a multidimensional Poisson limit theorem in free probability, and define joint free Poisson distributions in a non-commutative probability space.  We define (compound) free Poisson process explicitly,
 similar to the definitions of (compound) Poisson processes in classical probability. We proved that the sum of finitely many freely independent compound free Poisson processes is a compound free Poisson processes. We give a step by step procedure for constructing a (compound) free Poisson process. A Karhunen-Loeve expansion theorem for centered free Poisson processes is proved. We generalize free Poisson processes to a notion of free Poisson random measures (which is slightly different from the previously defined ones in free probability, but more like an analogue of classical Poisson random measures). Then we develop the integration theory of real-valued functions with respect to a free Poisson random measure, generalizing the classical integration theory to the free probability case. We  find that the integral of a function (in certain spaces of functions) with respect to a free Poisson random measure has a compound free Poisson distribution. For centered free Poisson random measures, we can get a simpler and more beautiful integration theory.
\end{abstract}

{\bf Key Words.} Free Probability,  Free Poisson Processes, Integration with respect to free Poisson random measures.

{\bf 2010 MSC} 46L54
\section*{Introduction}

The theory of stochastic processes is a very important branch in
classical probability with wide applications in engineering and
finance (\cite{DJ} and \cite{TK}).  In free probability theory,
stochastic processes have been studied since 1990's. The most
popular and important stochastic process in classical probability is
Brownian motion (the Wiener process). The counterpart of Brownian
motion in free probability is the free Brownian motion. The free
Brownian motion and stochastic analysis with respect to the free
Brownian motion have been studied thoroughly (\cite{PBi},
\cite{BS1}, \cite{BS2} etc.). Anshelevich \cite{MA1} developed an
integration theory of bi-processes with respect to (additive)
non-commutative stochastic measures. Free infinite divisibility and
free Levy processes and stochastic integration with respect to a
free Levy process were studies in \cite{BnT}. Certain stochastic
differential equations driven by free Levy processes were studied in
\cite{MG1} and \cite{MG2}.

It is well known that Poisson distributions  form  a class of the
most prominent processes in classical probability beyond normal
distributions (Lecture 12 in \cite{NS}), and free Poisson processes
form a class of the most important processes with free increment in
free probability after free Brownian motion (\cite{MA}). But free
Poisson distributions and processes have not been investigated
thoroughly. In this paper, we study some interesting questions on
free Poisson distributions and free Poisson processes.

{\bf A free Poisson Limit Theorem}. The counterpart of normal distributions in free probability is semicircle distributions. There is a semicircle limit theorem called {\sl free central limit theorem}(Theorem 8.10 in \cite{NS}).

Very similarly, a free Poisson distribution can be realized as the limit in distribution of a sequence of simple distributions (Proposition 12.11, Definition 12.12 in \cite{NS}). Nica and Speicher  presented a multidimensional central limit theorem (Theorem 8.17 in \cite{NS}). Roughly speaking, the theorem states that a joint semicircle distribution can be realized as the limit in distribution of a sequence of families of random variables. In this paper, we proved a multidimensional free Poisson limit theorem (Theorem 2.4). Therefore, a  joint free Poisson distribution can be defined as the limit in distribution of certain sequence of families of elements (Definition 2.6).

{\bf Free Poisson processes}. A construction of free Poisson process
with all free cumulants equal to $1$  was given in Section 4.2 in
\cite{MA}, but no definition of free Poisson processes was give
there.  Anshelevich gave a description of free Poisson processes as
``{\sl A process with stationary freely independent increments such
that the increments have free Poisson distributions is the free
Poisson process}" (4.2 in \cite{MA2}). In this paper, we give a
 definition of free Poisson process (Definition 3.1), an
analogue of a classical Poisson process. We provide a step-by-step
procedure for constructing a free Poisson process (Theorem 3.2). Nica and
Speicher gave the definition of  compound free Poisson distributions
in 12.16 of \cite{NS}. We generalize  free Poisson processes to the
compound case (Definition 3.5), and give a similar procedure for constructing
a compound free Poisson process (Theorem 3.6). In classical
probability, the sum of two independent Poisson processes is a
Poisson process (Section 2.3 in \cite{RG}). We prove in this paper
that the sum of finitely many  freely independent compound free
Poisson processes is a compound free Poisson process (Theorem 3.7),
and conditions under  which the sum of two freely independent free
Poisson processes is a free Poisson process are given (Corollary
3.8).

The Karhunen-Loeve expansion of a stochastic process is a significant result in classical stochastic processes (\cite{DJ}). Roughly speaking, the expansion says that under certain conditions, a stochastic process can be represented as  an infinite series of the products of random variables and deterministic functions $$X_t=\sum_{i=1}^\infty X_i\phi_i(t),0<t\le T,$$
where $X_i, i=1,2, \cdots,$ are uncorrelated random variables ($E(X_iX_j)=\delta_{i,j}\lambda_i$), and
 $\{\phi_i:i=1, 2, \cdots\}$ is an orthonormal basis of $L^2([0,T])$, $T>0$
(Theorem 5.3 or \cite{AA}).  In this paper, we present a Karhunen-Loeve expansion for a centered $L^2$-continuous free Poisson process in a $W^*$-probability space $(\A,\varphi)$ with precise formulas for $\phi_i(t)$ and $\lambda_i$ (Theorem 4.5).

{\bf Integration with respect to a free Poisson random measure}. Stochastic integration with respect to a non-commutative stochastic measure was studied by several mathematicians. Anshelevich \cite{MA1} defined a non-commutative stochastic measure as follows.
\begin{Definition}[Definition 1 in \cite{MA1}] A non-commutative stochastic measure is a map from the set of all finite half-open intervals $I=[a,b)\subset [0,\infty)$ to the self-adjoint part of a $W^*$-probability space $(\A,\varphi)$, $I\mapsto X(I)$,  with three properties.
\begin{enumerate}
\item Additivity. $I_1\cap I_2=\emptyset, I_1\cup I_2=J\Rightarrow X(I_1)+X(I_2)=X(J)$.
\item Stationary. The distribution of $X(I)$ dependents only on $|I|$.
\item Free increments. If $I_1, I_2, \cdots, I_n$ are mutually disjoint intervals, then $$X(I_1), X(I_2), \cdots, X(I_n)$$ are freely independent.
\end{enumerate}
\end{Definition}
Then Anshelevich \cite{MA1} defined the integral of a bi-process $U$ in $\A\otimes \A^{op}$ with respect to a non-commutative stochastic measure (\cite{MA1}). Glockner, Schurmann, and Speicher \cite{GSS} gave a definition in a $*$-probability space similar to the above Definition 0.1, and  named it  a {\sl free white noise}.

Barndorff-Nielson and Thorjornsen \cite{BnT} defined free Poisson random measures in a more general setting.
\begin{Definition}[Definition 6.7 in \cite{BnT}] Let $(\Theta, \mathcal{E}, \nu)$ be a measure space, and
$\mathcal{E}_0=\{E\in \mathcal{E}:\nu(E)<\infty\}$. A free Poisson random measure is a map $M$ from $\mathcal{E}_0$ into the cone of all non-negative operators of a $W^*$-probability space $(\A,\varphi)$ with the following properties.
\begin{enumerate}
\item $\forall E\in \mathcal{E}_0, M(E)$ has a free Poisson distribution $\kappa_n(M(E))=\nu(E), n=1, 2, \cdots$.
\item If $E_1, E_2, \cdots, E_n$ are mutually disjoint sets in $\mathcal{E}_0$, then $M(E_1), M(E_2), \cdots, M(E_n)$ are freely independent.
\item If $E_1, E_2, \cdots, E_n$ are mutually disjoint sets in $\mathcal{E}_0$, then $M(\cup_{i=1}^nE_i)=\sum_{i=1}^nM(E_i)$.
\end{enumerate}
\end{Definition}
The authors of \cite{BnT} also gave an existence theorem for free
Poisson random measures (Theorem 6.9 in \cite{BnT}), and defined the
integral of a $L^1(\Theta, \nu)$ function with respect to a free
Poisson random measure (Definition 6.19 in \cite{BnT}).

A definition of free Poisson random measures, very similar to Definition 0.2 above,  was given in \cite{BP}.  The authors of \cite{BP} studied multiple integrals of a special kind of functions with respect to a free Poisson random measure, and proved a semicircle limit theorem for free Poisson multiple integrals (Theorem 4.1 in \cite{BP}).

In this paper, we define free Poisson random measures via a sightly
different way from the others mentioned above in a $W^*$-probability
space. We do not require that operators $X_E$, for
$E\subset \mathbb{R}$ of finite measure, be non-negative, but
self-adjoint only (Definition 5.1).  Our definition of free Poisson
random measures is more like an analogue to that in classical
probability theory (Section 9.3 in \cite{TK}).  We define the
integral $X(f)$ of a function $f\in L^1(\mathbb{R})\cap
L^2(\mathbb{R})$ with respect to a free Poisson random measure
(Theorem 5.4). We prove a limit and free stochastic integration
exchange formula $$\lim_{n\rightarrow
\infty}\int_\mathbb{R}f_n(t)dX_E(t)=\int_\mathbb{R}\lim_{n\rightarrow
\infty}f_n(t)dX_E(t)$$ (Theorem 5.5). If $X_E\ge 0$, for every
$E\subset \mathbb{R}$ of finite measure, then the integration
operator $X:L^1_\mathbb{R}(\mathbb{R})\rightarrow L^1(\A,\varphi)$
is contractive (Theorem 5.7), where $L^1_\mathbb{R}(\mathbb{R})$ is
the space of all real-valued $L^1$-functions on $\mathbb{R}$. When
we focus on $L^{\infty -} =\cap_{n\ge 1}L^n(\mathbb{R})$, we find
that the integral $X(f)$ of $f\in L^{\infty -}$ has a compound free
Poisson distribution (Theorem 5.9). For a centered free Poisson
random measure (Definition 6.1), the integration operator $X$ is an
isometry from $L^2(\mathbb{R})$ into $L^2(\A,\varphi)$ (Lemma 6.3).
The integration operator $X$ with respect to a centered free Poisson
random measure can be extended to a bounded operator from
$L^1(\mathbb{R})$ into $L^1(\A,\varphi)$ with  norm less than or
equal to $2$ (Lemma 6.4).
\section{Preliminaries}

In this section we recall some basic concepts and results in free probability used in sequel or mentioned previously. The reader is referred to \cite{NS} and \cite{VDN} for the basics on free probability,  and to \cite{KR} for operator algebras.

{\bf Non-commutative Probability spaces}. A non-commutative
probability space is a pair $(\A,\varphi)$ consisting of a unital
algebra $\A$ and a unital linear functional $\varphi$ on $\A$. When
$\A$ is a $*$-unital algebra, $\varphi$ should be positive, i. e.
$\varphi(a^*a)\ge 0,\forall a\in \A$. A $C^*$-probability space
$(\A,\varphi)$ consists of a unital $C^*$-algebra and a state
$\varphi $ on $\A$. A $W^*$-probability space $(\A,\varphi)$
consists of a finite von Neumann algebra $\A$ and a faithful normal
tracial state $\varphi$ on $\A$. An element $a\in \A$ is called a
(non-commutative) random variable. $\varphi (a^n)$ is called the
$n$-th moment of $a$, $n=1, 2, \cdots$. Let $\mathbb{C}[X]$ be the
complex algebra of all polynomials of an indeterminate $X$. The
linear function $\mu_a:\mathbb{C}[X]\rightarrow \mathbb{C}$,
$\mu_a(P(X))=\varphi (P(a)), \forall P\in \mathbb{C}[X]$, is called
the distribution (or law) of $a$. A sequence $\{a_n\}$ of random
variables $a_n\in  (\A_n,\varphi_n)$ converges in distribution to
$a\in (\A,\varphi)$ if $$\lim_{n\rightarrow
\infty}\varphi_n(a_n^m)=\varphi (a^m), \forall m\ge 1.$$

{\bf Joint Distributions.}  Let $\mathbb{C}\langle X_1, X_2, \cdots, X_s\rangle$ be the unital algebra freely generated by $s$ non-commutative indeterminates  $X_1, X_2, \cdots, X_s$, and $a_1, a_2, \cdots, a_s\in \A$, where $(\A,\varphi)$ is a non-commutative probability space. The family $\{\varphi (a_{i_1}a_{i_2}\cdots a_{in}): 1\le i_1\le i_2\le \cdots \le i_n\le s, n\ge 1\}$ is called the  family of joint moments of $a_1, a_2, \cdots, a_s$. The linear functional $\mu:\mathbb{C}\langle X_1, X_2, \cdots, X_s\rangle \rightarrow \mathbb{C}$ defined by $$\mu(P)=\varphi (P(a_1, a_2, \cdots, a_s)), \forall P\in \mathbb{C}\langle X_1, \cdots, X_s \rangle,$$ is called the joint distribution of $a_1, a_2, \cdots, a_s$. Similar to the single variable case, we can define the limit in distribution of a sequence of families of random variables.

{\bf Free independence.}   A  family $\{\A_i: i\in I\}$ of unital subalgebras of a non-commutative probability space $(\A,\varphi)$ is freely independent (or free) if $\varphi (a_1a_2\cdots a_n)=0$ whenever the following conditions are met: $a_i\in \A_{l(i)}$, $\varphi (a_i)=0$ for $i=1, 2, \cdots, n$, and $l(i)\ne l(i+1)$, for $i=1, 2, \cdots, n-1$. A family $\{a_i:i\in I\}$ of elements is free if the unital subalgebras generated by $a_i$'s are free.

 {\bf Non-crossing partitions.} Given a natural number $m\ge 1$, let $[m]=\{1, 2, \cdots, m\}$. A partition $\pi$ of $[m]$ is a collection of non-empty disjoint subsets of $[m]$ such that the union of all subsets in $\pi$ is $[m]$. A partition $\pi=\{B_1, B_2,\cdots, B_r\}$ of $[m]$ is non-crossing if one cannot find two block $B_i$ and $B_j$ of $\pi$, and four numbers $p_1, p_2\in B_i$, $q_1, q_2\in B_j$ such that $p_1<q_1<p_2<q_2$. The collection of all non-crossing partitions of $[m]$ is denoted by $NC(m)$. $|NC(m)|$, the number of non-crossing partitions of $[m]$,  is $C_m=\frac{(2m)!}{m!(m+1)!}$, which is called the $m$-th Catalan number (Notation 2.9 in \cite{NS}).

 {\bf The Mobius function.} Let $P$ be a finite partial ordered set (poset), and $P^{(2)}=\{(\pi, \sigma): \pi, \sigma\in P, \pi \le \sigma\}$. For two functions $F, G:P^{(2)}\rightarrow \mathbb{C}$, we define the convolution $F*G$ by $$F*G(\pi, \sigma):=\sum_{\rho\in P, \pi\le \rho\le \sigma}F(\pi, \rho)G(\rho, \sigma).$$ Let $\delta(\pi,\sigma)=1$, if $\pi=\sigma$; $\delta(\pi, \sigma)=0,$ if $\pi <\sigma$. Then
 $$F*\delta(\pi,\sigma)=\sum_{\rho\in P, \pi\le \rho\le \sigma}F(\pi, \rho)\delta(\rho, \sigma)=F(\pi, \sigma), \forall F.$$ It follows that $\delta$ is the unit of set of all functions on $P^{(2)}$ with respect to convolution $*$.  The inverse function of the function $\zeta: P^{(2)}\rightarrow \mathbb{C}$, $\zeta(\pi, \sigma)=1, \forall (\pi, \sigma)\in P^{(2)}$, with respect to the convolution $*$ is called the Mobius function $\mu_P$ of $P$.

 {\bf Free Cumulants} Let $\pi, \sigma\in NC(n)$. We say $\pi\le \sigma$ if each block (a subset of $[n]$) of $\pi$ is completely contained in one of the blocks of $\sigma$. $NC(n)$ is a poset by this partial order. The Mobius function of $NC(n)$ is denoted by $\mu_n$. The unital linear functional $\varphi:\A\rightarrow \mathbb{C}$ produces a sequence of multilinear functionals $$\varphi_n:\A^n\rightarrow \mathbb{C}, \varphi_n(a_1, a_2, \cdots, a_n)=\varphi (a_1a_2\cdots a_n), n=1, 2, \cdots.$$ Let $V=\{i_1, i_2, \cdots, i_s\}\subseteq [n]$. We define $\varphi_V(a_1, a_2, \cdots, a_n)=\varphi (a_{i_1}a_{i_2}\cdots a_{i_s})$. More generally, for a partition $\pi=\{V_1, V_2, \cdots, V_r\}\in NC(n)$, we define $\varphi_\pi(a_1, a_2, \cdots, a_n)=\prod_{i=1}^r\varphi_{V_i}(a_1, a_2, \cdots, a_n)$. The $n$-th free cumulant of $(\A,\varphi)$ is the multilinear functional $\kappa_n:\A^n\rightarrow \mathbb{C}$ defined by   $$\kappa_n(a_1, a_2, \cdots, a_n)=\sum_{\pi\in NC(n)}\varphi_\pi (a_1, a_2, \cdots, a_n)\mu_n(\pi, 1_n), $$ where $1_n=[n]$ is the single-block partition of $[n]$.

 Free cumulants $\kappa_n:\A^n\rightarrow \mathbb{C}$ and free independence have a very beautiful relation.
 \begin{Theorem}[Theorem 11.20 in \cite{NS}] A family $\{a_i:i\in I\}$ of elements in $(\A, \varphi)$ is freely independent if and only if for all $n\ge 2$ and all $i(1), i(2), \cdots, i(n)\in I$, $$\kappa_n(a_{i(1)}a_{i(2)}\cdots a_{i(n)})=0$$ whenever there exist $1\le l, k\le n$ with $i(l)\ne i(k)$. Therefore, if $a$ and b are freely independent, then $\kappa_n(a+b)=\kappa_n(a+b, a+b, \cdots, a+b)=k_n(a)+k_n(b)$.
 \end{Theorem}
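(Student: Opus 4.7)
The plan is to prove both directions of the equivalence by exploiting the moment-cumulant relation obtained by M\"obius inversion of the definition of $\kappa_n$, namely
$$\varphi(a_1a_2\cdots a_n)=\sum_{\pi\in NC(n)}\kappa_\pi(a_1,a_2,\ldots,a_n),$$
where $\kappa_\pi$ denotes the product of cumulants over the blocks of $\pi$. Once this is in hand, the concluding identity $\kappa_n(a+b,\ldots,a+b)=\kappa_n(a)+\kappa_n(b)$ follows immediately from multilinearity: expanding into $2^n$ terms, all mixed terms vanish by the main equivalence and only the two ``pure'' terms survive.

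For the direction ``vanishing of mixed cumulants implies freeness'', I would fix an alternating tuple $a_j\in \A_{l(j)}$ with $\varphi(a_j)=0$ and $l(j)\ne l(j+1)$ and expand $\varphi(a_1\cdots a_n)$ via the moment-cumulant formula above. By the hypothesis, only partitions $\pi\in NC(n)$ each of whose blocks sits entirely inside a single index class contribute. The key combinatorial observation is that every non-crossing partition possesses an \emph{interval block}, i.e.\ a block of the form $\{k,k+1,\ldots,k+r\}$; if this block had length $\geq 2$ it would force $l(k)=l(k+1)$, contradicting alternation, so it must be a singleton $\{k\}$. That singleton contributes $\kappa_1(a_k)=\varphi(a_k)=0$ as a factor, killing $\kappa_\pi$ and hence the whole sum, so $\varphi(a_1\cdots a_n)=0$, which is precisely the defining condition of free independence.

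For the converse, I would proceed by induction on $n$ using the M\"obius inversion in the other direction,
$$\kappa_n(a_{i(1)},\ldots,a_{i(n)})=\sum_{\pi\in NC(n)}\mu_n(\pi,1_n)\,\varphi_\pi(a_{i(1)},\ldots,a_{i(n)}).$$
The case $n=2$ with $i(1)\ne i(2)$ is a direct calculation giving $\varphi(a_{i(1)}a_{i(2)})-\varphi(a_{i(1)})\varphi(a_{i(2)})$, which is zero by freeness. For the inductive step, one first reduces to the centered case $\varphi(a_{i(j)})=0$ by multilinearity; then for an alternating index sequence freeness gives $\varphi(a_{i(1)}\cdots a_{i(n)})=0$ and the inductive hypothesis makes every proper $\varphi_\pi$ with $\pi\ne 1_n$ vanish as well, so only the single-block term survives and the M\"obius identity collapses to $\kappa_n=\varphi(a_{i(1)}\cdots a_{i(n)})=0$. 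The general mixed, non-alternating case is reduced to the alternating case by grouping maximal runs of equal-indexed factors into single elements and invoking the product formula that expresses a cumulant whose argument is a product of two elements as a sum of cumulants over non-crossing partitions of the enlarged index set.

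I expect the delicate step to be the inductive argument for the converse: freeness is a statement about centered alternating \emph{products} of moments, whereas $\kappa_n$ is a weighted sum over all of $NC(n)$, so one must track carefully how the M\"obius weights on $NC(n)$ conspire with the multiplicative structure of $\varphi_\pi$ to annihilate all mixed contributions. The ``if'' direction, by contrast, reduces cleanly to the single combinatorial fact that every $\pi\in NC(n)$ has an interval block.
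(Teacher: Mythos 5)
This statement is quoted verbatim from Theorem 11.20 of \cite{NS}; the paper offers no proof of its own, so your sketch can only be measured against the standard argument in that reference, which it follows in outline: moment--cumulant inversion, the interval-block lemma for non-crossing partitions, and multilinearity for the final additivity claim. The ``only if'' direction and the concluding identity $\kappa_n(a+b)=\kappa_n(a)+\kappa_n(b)$ are handled correctly. One small imprecision in the converse: what kills the terms $\varphi_\pi$ for $\pi\ne 1_n$ in the M\"obius sum is not really the inductive hypothesis but the same interval-block observation --- every non-crossing $\pi\ne 1_n$ has an interval block $V\subsetneq[n]$, and $\varphi_V$ vanishes either because $V$ is a singleton of a centered element or because it is itself a centered alternating moment, to which freeness applies directly.

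The genuine gap is in the ``vanishing mixed cumulants $\Rightarrow$ freeness'' direction. With the paper's own definition (Section 1), freeness of the family $\{a_i\}$ means freeness of the \emph{unital subalgebras generated by the} $a_i$, so the defining condition $\varphi(b_1\cdots b_n)=0$ must be verified for centered alternating $b_j$ that are arbitrary polynomials in $a_{l(j)}$, not merely for the generators themselves. Your argument establishes only the latter, and the two are not the same statement. To close the gap you need the products-as-arguments formula (Theorem 11.12 in \cite{NS}) in \emph{this} direction as well --- you invoke it only for the converse --- to show that vanishing of mixed cumulants of the generators forces vanishing of mixed cumulants of monomials $a_{l(1)}^{m_1},\dots,a_{l(n)}^{m_n}$: every $\sigma\in NC(m_1+\cdots+m_n)$ with $\sigma\vee\hat\pi=1$ must contain a block joining two groups with distinct indices (otherwise the join could not be connected), and that block contributes a vanishing mixed cumulant. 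Only after this reduction does your interval-block argument yield the freeness condition for the generated subalgebras.
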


 {\bf Semicircle elements.} Let $(\A,\varphi)$ be a $*$-probability space. A self-adjoint element $a\in \A$ is a semicircle element (or has a semicircle distribution) if $$\varphi (a^n)=\frac{2}{\pi r^2}\int_{-r}^rt^n\sqrt{r^2-t^2}dt, n=1, 2, \cdots,$$ where $r$ is called the radius of $a$. When $r=2$, $\varphi (a^2)=1$, we say $a$ a standard semicircle element (or has a standard semicircle distribution). A semicircle element can be characterized by $\varphi(a^{2k})=(r^2/4)^kC_k$, where $C_k$ is the $k$-th Catalan number, and $\varphi (a^{2k+1})=0$, $k=0, 1, 2, \cdots$, or by free cumulants $\kappa_n(a)=\delta_{n,2}\frac{r^2}{4}$ ((11.13) in \cite{NS}).

\section{Multidimensional free Poisson distributions}

By the discussion in Page 203 and Exercise 12.22 of \cite{NS}, a classical Poisson distribution is the limit in distribution of a sequence of convolutions of Bernoulli distributions. In the point of view of random variables, we can restate it as follows. Let $\lambda >0, \alpha\in \mathbb{R}$. For each $N\in \mathbb{N}, N>\lambda$, let $\{b_{i,N}:i=1,2, \cdots, N\}$ be a sequence of i.i.d. Bernoulli random variables such that $$Pr(b_{i,N}=0)=1-\frac{\lambda}{N}, Pr(b_{i,N}=\alpha)=\frac{\lambda}{N}.$$ Then the binomial random variable $S_N=\sum_{i=1}^Nb_{i,N}$ has a binomial distribution $$Pr(S_N=k\alpha)=C_N^k(\frac{\lambda}{N})^k(1-\frac{\lambda}{N})^{N-k},$$ $k=0, 1, 2, \cdots, N$,  where $C_N^k$ is the combination number (or the binomial coefficient). Let $N\rightarrow \infty$, by elementary calculus, we can get
$$\lim_{N\rightarrow \infty}Pr(S_N=k\alpha)=\frac{\lambda^k}{k!}e^{-\lambda}=Pr(P=k\alpha),$$ where $P$ has a Poisson distribution $Pr(P=k\alpha)=\frac{\lambda^k}{k!}e^{-\lambda}$, $k=0, 1, 2, \cdots$.

In non-commutative case, the free Poisson limit theorem (Proposition 12.11 in \cite{NS}) says that a free Poisson distribution is the limit in distribution of a sequence of free convolutions of Bernoulli distributions. We want to restate it in the language of random variables.

 Let's define Bernoulli  random variables in a non-commutative probability space. Let $(\A,\varphi)$ be a non-commutative probability space.  A {\sl Bernoulli random variable} $a\in \A$ is a linear combination $a=\alpha p+\beta (1-p)$, where $\alpha,\beta \in \mathbb{R}$, and $p\in \A$ is an idempotent ($p^2=p$) with $0\le \varphi(p)\le 1$. The classical interpretation of a Bernoulli random variable is that $a$ is a random variable with two ``values": $\alpha$ and $\beta$, and $Pr(a=\alpha)=\varphi(p), Pr(a=\beta)=1-\varphi(p)$. In the free Poisson limit theorem,  $\beta=0$, $\varphi(p)=\frac{\lambda}{N}, N>\lambda$. We can restate the free Poisson limit theorem as follows. Let $\lambda>0, \alpha\in \mathbb{R}$. For $N\in \mathbb{N}, N>\lambda$, let $\{\alpha p_{1,N}, \alpha p_{2,N}, \cdots, \alpha p_{N,N}\}$ be a free family of Bernoulli random variables such that $\varphi(p_{i,N})=\frac{\lambda}{N}, i=1, 2, \cdots, N$. Let $S_N=\sum_{i=1}^N\alpha p_{i,N}$. Then
 $$\lim_{N\rightarrow \infty}\kappa_m(S_N)=\lambda \alpha^m, m=1,2, \cdots.$$
Hence, we may restate the definition of free Poisson random variables as follows.
\begin{Definition}[Proposition 12.11, Definition 12.12 \cite{NS}] Let $\lambda\ge 0, \alpha \in \mathbb{R}$, and $(\A,\varphi)$ a non-commutative probability space. A random variable $a\in \A$  has a free Poisson distribution if the free cumulants of $a$ are $\kappa_n(a)=\lambda \alpha^n, \forall n\in \mathbb{N}$.
\end{Definition}

In this section, we want to generalize the results on free Poisson distributions in Lecture 12 of \cite{NS} to the multidimensional case.

By the proof of Theorem 13.1  in \cite{NS}, we can modify the theorem slightly to be the following form.
\begin{Proposition}[Theorem 13.1 and Lemma 13.2 in \cite{NS}]
Let $\{n_k\}$ be a sequence of natural numbers such that
$\lim_{k\rightarrow \infty}n_k =\infty$, and, for each natural
number $k$, $(\A_k, \varphi_k)$ be a non-commutative probability
space. Let $I$ be an index set. Consider a triangular array of
random variables, i. e., for each $i\in I$, $0\le r\le n_k$, we have
a random variable $a^{(i)}_{n_k, r}\in \A_k$. Assume that, for each
$k$, the sets $\{a^{(i)}_{(n_k,1)}\}_{i\in
I},\{a^{(i)}_{(n_k,2)}\}_{i\in I}, \cdots,
\{a^{(i)}_{(n_k,n_k)}\}_{i\in I} $ are free and identically
distributed. Then the following statements are equivalent.
\begin{enumerate}
\item There is a family of random variables $(b_i)_{i\in I}$ in some non-commutative probability space $(\A,\varphi)$ such that
$(a^{(i)}_{n_k,1}+a^{(i)}_{n_k,2}+\cdots+a^{(i)}_{n_k,n_k})_{i\in I}$ converges in distribution to $(b_i)_{i\in I}$, as $k\rightarrow \infty$.
\item For all $n\ge 1$, and all $i(1), i(2), \cdots, i(n)\in I$, the limits
$\lim_{k\rightarrow \infty}n_k\varphi_k(a^{(i(1))}_{n_k,r}\cdots a^{(i(n))}_{n_k,r})$ exist, $1\le r\le n_k$.
\item For all $n\ge 1$, and all $i(1), i(2), \cdots, i(n)\in I$, the limits
$\lim_{k\rightarrow \infty}n_k \kappa_n^k(a^{(i(1))}_{n_k,r}\cdots a^{(i(n))}_{n_k,r})$ exist, $1\le r\le n_k,$ where $\kappa_n^k$ is the $n$-th free cumulant functional in $\A_k$.
\end{enumerate}
Furthermore, if one  of these conditions is satisfied, then the
limits in $(2)$ are equal to the corresponding limits in $(3)$, and
the joint distribution of the limit family $(b_i)_{i\in I}$ is
determined in terms of free cumulants by ($n\ge 1, i(1), i(2),
\cdots, i(n)\in I$)
$$\kappa_n (b_{i(1)}b_{i(2)}\cdots b_{i(n)})=\lim_{k\rightarrow \infty}n_k \varphi_k (a^{(i(1))}_{n_k,r}a^{(i(2))}_{n_k,r}\cdots a^{(i(n))}_{n_k,r}). $$
\end{Proposition}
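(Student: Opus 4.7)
The plan is to reduce everything to statements about free cumulants, exploiting the characterization of free independence by the vanishing of mixed cumulants (Theorem~1.1). For each $k$ and $i\in I$, set $S_k^{(i)}:=\sum_{r=1}^{n_k} a^{(i)}_{n_k,r}$. By multilinearity of free cumulants together with freeness of the rows $\{a^{(i)}_{n_k,r}\}_{i\in I}$ ($r=1,\ldots,n_k$), every mixed cumulant whose entries are drawn from two distinct rows vanishes; combining this with the identical distribution of the rows yields the diagonal identity
$$\kappa_n^k\bigl(S_k^{(i(1))},\ldots,S_k^{(i(n))}\bigr) \;=\; n_k\,\kappa_n^k\bigl(a^{(i(1))}_{n_k,1},\ldots,a^{(i(n))}_{n_k,1}\bigr).$$
Since convergence in distribution of a family is equivalent, via the invertible moment-cumulant formula, to convergence of all joint free cumulants, this identity immediately produces the equivalence (1) $\Longleftrightarrow$ (3) and the final formula $\kappa_n(b_{i(1)}\cdots b_{i(n)})=\lim_k n_k\,\kappa_n^k(\cdots)$ asserted in the statement.

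For (2) $\Longleftrightarrow$ (3), I would apply the moment-cumulant formula inside a single row and isolate the single-block contribution:
$$n_k\,\varphi_k\bigl(a^{(i(1))}_{n_k,1}\cdots a^{(i(n))}_{n_k,1}\bigr) \;=\; n_k\,\kappa_n^k\bigl(a^{(i(1))}_{n_k,1},\ldots,a^{(i(n))}_{n_k,1}\bigr) \;+\; \sum_{\substack{\pi\in NC(n)\\ |\pi|\ge 2}} n_k\prod_{V\in\pi}\kappa_V^k(\cdots).$$
The central claim is that every free cumulant $\kappa_m^k(\cdots)$ of order $m\le n-1$ is $O(1/n_k)$. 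Under (3) this is immediate, while under (2) it is proved by induction on $n$: the base $n=1$ is trivial since $\varphi_k=\kappa_1^k$, and if (3) is already known through order $n-1$ then $\kappa_m^k=O(1/n_k)$ for $m\le n-1$. Because each block of a partition with $|\pi|\ge 2$ has size at most $n-1$, the tail term satisfies $n_k\prod_{V\in\pi}\kappa_V^k(\cdots) = O(n_k^{1-|\pi|})\to 0$. Consequently the two sides of the displayed formula have identical asymptotic behavior, which establishes (2) $\Longleftrightarrow$ (3) with equal limits and completes the argument when combined with the previous step.

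The main obstacle is the careful bookkeeping in the inductive step for (2) $\Rightarrow$ (3): one must verify that the $O(1/n_k)$ bound on single-row cumulants genuinely propagates from order to order, and that existence (rather than an explicit rate) of the moment limits in (2) is enough to feed Mobius inversion at each level of the induction. Uniformity in the possibly infinite index set $I$ causes no difficulty, since each step of the induction involves only the finitely many indices $i(1),\ldots,i(n)$ appearing in the statement. Once these points are settled, the remainder is a direct adaptation of Theorem~13.1 and Lemma~13.2 in \cite{NS} from a single variable per row to a triangular array whose rows are families of elements indexed by $I$.
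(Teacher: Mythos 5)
Your proposal is correct and follows essentially the same route the paper relies on: the paper gives no proof of its own but defers to Theorem 13.1 and Lemma 13.2 of \cite{NS}, and your argument (vanishing of mixed cumulants to get the diagonal identity $\kappa_n^k(S_k^{(i(1))},\ldots,S_k^{(i(n))})=n_k\,\kappa_n^k(a^{(i(1))}_{n_k,1},\ldots,a^{(i(n))}_{n_k,1})$, plus the inductive moment--cumulant comparison showing the multi-block terms are $O(n_k^{1-|\pi|})$) is exactly the standard proof being cited there, adapted to families indexed by $I$. The only point worth making explicit is the purely algebraic realization of the limit family $(b_i)_{i\in I}$ (defining $\varphi$ on $\mathbb{C}\langle X_i:i\in I\rangle$ by the limit moments), which is immediate in this setting.
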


We will use the following elementary result in sequel.
\begin{Lemma}Let $\{a_{i,j}: i, j=1, 2, \cdots\}$ be a bi-index sequence of complex numbers. If $\sup\{|a_{i,j}|:i=1,2,\cdots\}=M_j<\infty, \forall j$, then there exists a sequence $(n_k)_{k\in \mathbb{N}}$ of natural numbers such that $\lim_{k\rightarrow \infty}n_k=\infty$, and $\lim_{k\rightarrow \infty}a_{n(k),j}$ exists,$\forall j\in \mathbb{N}$.
\end{Lemma}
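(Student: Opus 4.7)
The plan is to apply the standard Cantor diagonal extraction argument, using the Bolzano--Weierstrass theorem in $\mathbb{C}$ at each stage.

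First I would fix the columns and iteratively thin the sequence of row indices. For $j=1$, the sequence $\{a_{i,1}\}_{i\in\mathbb{N}}$ is bounded in $\mathbb{C}$ by the hypothesis $M_1<\infty$, so Bolzano--Weierstrass yields a strictly increasing sequence of indices $(n^{(1)}_i)_{i\in\mathbb{N}}$ along which $\{a_{n^{(1)}_i,1}\}$ converges. For $j=2$, the subsequence $\{a_{n^{(1)}_i,2}\}$ is bounded by $M_2$, so we can refine once more to a strictly increasing subsequence $(n^{(2)}_i)$ of $(n^{(1)}_i)$ along which $\{a_{n^{(2)}_i,2}\}$ also converges. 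Proceeding recursively, for each $j\ge 1$ I obtain a strictly increasing sequence $(n^{(j)}_i)_{i\in\mathbb{N}}$ which is a subsequence of $(n^{(j-1)}_i)_{i\in\mathbb{N}}$ and along which $\{a_{n^{(j)}_i,\ell}\}$ converges for every $\ell\le j$.

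Next I would extract the diagonal: set $n_k := n^{(k)}_k$. For any fixed $j$, the tail $(n_k)_{k\ge j}$ is, by construction, a subsequence of $(n^{(j)}_i)_{i\in\mathbb{N}}$, so $\lim_{k\to\infty}a_{n_k,j}$ exists. Because each $(n^{(k)}_i)_i$ is strictly increasing in $i$ (hence $n^{(k)}_k\ge k$), we have $n_k\ge k$ and therefore $n_k\to\infty$, as required.

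The argument is completely routine; the only thing to be careful about is ensuring the subsequences are chosen strictly increasing so that the diagonal indices actually tend to infinity, but that is automatic from a Bolzano--Weierstrass extraction. No substantive obstacle is anticipated.
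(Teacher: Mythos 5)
Your proposal is correct and is essentially the same Cantor diagonal argument the paper uses: Bolzano--Weierstrass extraction column by column, nested subsequences of row indices, then the diagonal $n_k=n^{(k)}_k$ with the observation that $n_k\ge k$ forces $n_k\to\infty$. No meaningful difference from the paper's proof.
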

\begin{proof}
Since $\{|a_{i,1}|:i=1,2,\cdots\}$ is bounded, there is a sequence $\{i(k,1):k=1,2,\cdots\}$ of natural numbers such that $\lim_{k\rightarrow \infty}a_{i(k,1),1}=a_1$, for some number $a_1$. Consider the sequence $\{a_{i(k,1),2}:k=1,2,\cdots\}$. Since the sequence is bounded, there is a subsequence $\{i(k,2):k=1,2,\cdots\}$ of $\{i(k,1):k=1,2,\cdots\}$ such that $\lim_{k\rightarrow \infty}a_{i(k,2),2}=a_2$. But we also have $\lim_{k\rightarrow\infty}a_{i(k,2),1}=a_1$. Continuing the process, we can obtain a bi-index sequence $\{i(k,l):k,l=1,2,\cdots\}$ of natural numbers such that $\lim_{k\rightarrow \infty}a_{i(k,l),j}=a_j$, for $j\le l$. Let $n_k=i(k,k)$, for $k=1,2,\cdots$. Then, for a $j$, and an $\epsilon >0$, there there exists a natural number $K>j$ such that $|a_{i(k,j),j}-a_j|<\epsilon,\forall k>K$. Note that $\{i(n,k); n=1,2,\cdots\}$ is a subsequence of $\{i(n,j):n=1,2,\cdots\}$. Thus, $i(k,k)\ge i(k,j)$, and $|a_{i(k,k)}-a_j|<\epsilon,\forall k>K$. It means that $\lim_{k\rightarrow \infty}a_{n_k,j}=a_j, \forall j$.
 \end{proof}
 \begin{Theorem} Let $\{\alpha_i:i=1,2,\cdots\}$ be a sequence of real numbers, $\{\lambda_i\ge 0\}_{i\in\mathbb{N}}$ with $\lambda=\sup\{\lambda_i:i\ge 1\}<\infty$,  and for each $N\in \mathbb{N}, N>\lambda$, there be $N$ freely independent and identically distributed sequences $$\{p_{1,N}^{(i)}\}_{i\in \mathbb{N}}, \{p_{2,N}^{(i)}\}_{i\in \mathbb{N}}, \cdots, \{p_{N,N}^{(i)}\}_{i\in \mathbb{N}}$$ of commutative projections on a $C^*$-probability space $(\A_N,\varphi_N)$, i. e., $p_{j,N}^{(i(1))}p_{j,N}^{(i(2))}=p_{j,N}^{(i(2))}p_{j,N}^{(i(1))}$, $\forall i(1), i(2)=1,2,\cdots$. Moreover, $\varphi_N(p^{(i)}_{r,N})=\frac{\lambda_i}{N}, i=1,2,\cdots, r=1,2,\cdots, N$.  Define a triangular family of sequences of random variables $\{a_{j, N}^{(i)}=\alpha_ip^{(i)}_{j,N}:i=1, 2, \cdots\}$, for $ j=1, 2, \cdots, N, N=1,2,\cdots.$ Then there exists a family of random variables $(b_i)_{i\in\mathbb{N}}$ in a non-commutative probability space $(\A,\varphi)$ and a sequence $\{n_k:k=1,2,\cdots\}$ of natural numbers such that $\lim_{k\rightarrow \infty}n_k=\infty$ and  $(a_{1, n_k}^{(i)}+a_{2,n_k}^{(i)}+\cdots +a_{n_k, n_k}^{(i)})_{i\in \mathbb{N}}$ converges to $(b_i)_{i\in \mathbb{N}}$ in distribution, as $k\rightarrow \infty$.
 \end{Theorem}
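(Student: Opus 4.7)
The plan is to verify condition $(2)$ of Proposition 2.2 along a suitable subsequence extracted using Lemma 2.3. First I would observe that the hypothesis of Proposition 2.2 is automatic: for each $N$, the sets $\{a_{j,N}^{(i)}\}_{i\in\mathbb{N}}$ indexed by $j=1,\dots,N$ are free and identically distributed, since they are obtained by scaling the free and identically distributed sequences of projections $\{p_{j,N}^{(i)}\}_{i\in\mathbb{N}}$ by the deterministic real numbers $\alpha_i$.

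The core technical step is a uniform bound on the rescaled mixed moments. Fix a tuple $(i(1),\dots,i(n))$ and any $r\in\{1,\dots,N\}$. Because the projections within the single sequence $\{p_{r,N}^{(i)}\}_i$ commute and are idempotent, the product
\[
p_{r,N}^{(i(1))}p_{r,N}^{(i(2))}\cdots p_{r,N}^{(i(n))}\;=\;p_{r,N}^{(j_1)}\cdots p_{r,N}^{(j_m)},
\]
where $\{j_1,\dots,j_m\}$ is the set of distinct indices appearing in the tuple, is itself a projection. In the operator order it is dominated by $p_{r,N}^{(j_1)}$, so by positivity of $\varphi_N$
\[
0\;\le\;\varphi_N\bigl(p_{r,N}^{(i(1))}\cdots p_{r,N}^{(i(n))}\bigr)\;\le\;\varphi_N\bigl(p_{r,N}^{(j_1)}\bigr)\;=\;\frac{\lambda_{j_1}}{N}\;\le\;\frac{\lambda}{N}.
\]
Multiplying by $N$ and by $\alpha_{i(1)}\cdots\alpha_{i(n)}$ yields the uniform bound
\[
\bigl|N\,\varphi_N\bigl(a_{r,N}^{(i(1))}\cdots a_{r,N}^{(i(n))}\bigr)\bigr|\;\le\;\lambda\,|\alpha_{i(1)}\alpha_{i(2)}\cdots\alpha_{i(n)}|,
\]
which is independent of $N$ and of $r$.

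Finally, since the set of finite tuples of natural numbers is countable, fixing an enumeration of tuples allows me to apply Lemma 2.3 to the doubly-indexed array whose $(N,\text{tuple})$ entry is the above rescaled moment. This extracts a subsequence $n_k\to\infty$ along which $n_k\varphi_{n_k}\bigl(a_{r,n_k}^{(i(1))}\cdots a_{r,n_k}^{(i(n))}\bigr)$ converges for every tuple. Condition $(2)$ of Proposition 2.2 is then in force, and the proposition supplies a family $(b_i)_{i\in\mathbb{N}}$ in a non-commutative probability space $(\A,\varphi)$ such that $\bigl(\sum_{j=1}^{n_k}a_{j,n_k}^{(i)}\bigr)_{i\in\mathbb{N}}$ converges in joint distribution to $(b_i)_{i\in\mathbb{N}}$, with the joint free cumulants given by the limit formula in Proposition 2.2. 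The main obstacle is the uniform-in-$N$ bound on the mixed moments: without the in-sequence commutativity hypothesis the product $p_{r,N}^{(i(1))}\cdots p_{r,N}^{(i(n))}$ would not be a projection and the crude bound $|\varphi_N(\cdots)|\le 1$ would only give $|N\varphi_N(\cdots)|\le N$, which is useless; exploiting commutativity to collapse the product to a single projection of mass $O(1/N)$ is precisely what makes the compactness argument work.
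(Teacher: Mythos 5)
Your proposal is correct and follows essentially the same route as the paper: bound $N\varphi_N(a_{r,N}^{(i(1))}\cdots a_{r,N}^{(i(n))})$ uniformly in $N$ by exploiting that the product of the commuting projections is dominated by a single projection of trace $\lambda_{i(j)}/N$, then enumerate the tuples and apply the diagonal-extraction Lemma 2.3 followed by Proposition 2.2. The only cosmetic difference is that the paper records the bound as $|\alpha_{i(1)}\cdots\alpha_{i(n)}|\min_j\lambda_{i(j)}$ rather than your $\lambda|\alpha_{i(1)}\cdots\alpha_{i(n)}|$, which changes nothing.
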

\begin{proof}
For $N, i(1), i(2), \cdots, i(n), n\in \mathbb{N}$, let $$f(N, i(1), i(2), \cdots, i(n))=N\varphi_N(a^{(i(1))}_{r,N}a^{(i(2))}_{r,N}\cdots a^{(i(n))}_{r,N}), 1\le r\le N, $$
and $M(i(1), i(2),\cdots, i(n))= |\alpha_{i(1)}\alpha_{I(2)}\cdots\alpha_{i(n)}| \min\{\lambda_{i(j)}:j=1,2,\cdots, n\}.$ Then
\begin{align*}
&|f(N,i(1), i(2), \cdots, i(n))|=N|\alpha_{i(1)}\alpha_{i(2)}\cdots\alpha_{i(n)}\varphi_N(p^{(i(1))}_{r,N}p^{(i(2))}_{r,N}\cdots p^{(i(n))}_{r,N})|\\
\le & N |\alpha_{i(1)}\alpha_{i(2)}\cdots\alpha_{i(n)}|\min\{|\varphi_N(p^{(i(j))}_{r,N})|:j=1,2,\cdots n\}\\
=& N|\alpha_{i(1)}\alpha_{i(2)}\cdots\alpha_{i(n)}| \min\{\frac{\lambda_{i(j)}}{N}:j=1,2,\cdots, n\}\\
=&M(i(1), i(2),\cdots, i(n)),
 \end{align*}
 since $\varphi_N:A_N\rightarrow \mathbb{C}$ is positive, and $p^{(i(1))}_{r,N}p^{(i(2))}_{r,N}\cdots p^{(i(n))}_{r,N}\le \min\{p^{(i(j))}:j=1,2,\cdots, n\}$, as projections.

 Let $$S_m=\{(i(1), i(2), \cdots, i(n)): i(1)+i(2)+\cdots +i(n)=m, i(1), i(2), \cdots, i(n)\in \mathbb{N}\},$$ for $m\in \mathbb{N}$. Then for each $ m\in \mathbb{N}$, $S_m$ is a finite set with $|S_m|=k_m$, and $\{S_m:m\in \mathbb{N}\}$ is a partition of the set $\{(i(1), i(2), \cdots, i(n): i(1), i(2), \cdots, i(n), n\in \mathbb{N}\}$. Define a bijective map
 $$\gamma: S_1\rightarrow \{1\}, \gamma: S_m\rightarrow \{(\sum_{l=1}^
 {m-1}k_{l})+1, (\sum_{l=1}^{m-1}k_{l})+2, \cdots, \sum_{l=1}^
 {m}k_{l}\}, m\ge 2. $$ For instance,   $\gamma((1,1))=2, \gamma(2)=3, \gamma(S_2)=\{2,3\}$.  It implies that
\begin{align*}
&\gamma(\{(i(1), i(2), \cdots, i(n)): i(1), i(2), \cdots, i(n), n\in \mathbb{N}\})\\
=&\gamma(S_1)\cup \gamma(S_2)\cup\cdots\cup \gamma(S_m)\cup\cdots\\
=&\{j:j=1,2, 3, \cdots\}.
\end{align*}
Thus, $\{f(N,i(1), i(2), \cdots, i(n)):N, i(1), i(2), \cdots, i(n),
n\in \mathbb{N}\}=\{f(N,\gamma^{-1}(j)):N, j\in\mathbb{N}\}$
 is a bi-index sequence. By Lemma 2.3, there is a sequence $(n_k)_{k\in\mathbb{N}}$ of natural numbers such that $\lim_{k\rightarrow \infty} n_k=\infty$, and $f(n_k, i(1), i(2), \cdots, i(n))$ converges  as $k\rightarrow\infty$, for every tuple $(i(1), i(2), \cdots, i(n))$. By Theorem 2.2, there is a family of random variables $(b_i)_{i\in \mathbb{N}}$ in a non-commutative probability space $(\A,\varphi)$ such that
$((a^{(i)}_{1,n_k}+a^{(i)}_{2,n_k}+\cdots+a^{(i)}_{n_k,n_k})_{i\in \mathbb{N}}$ converges to $(b_i)_{i\in \mathbb{N}}$ in distribution.
\end{proof}
\begin{Remark}

\begin{enumerate}
\item By Theorems 2.2 and 2.4, For each $i\in \mathbb{N}$,
$$\kappa_n(b_i)=\lim_{k\rightarrow\infty}(n_k\varphi_{n_k}((a^{(i)}_{r,n_k})^n)=\alpha_i^n\lambda_i, n=1, 2, \cdots.$$
Hence, $b_i$ has a free Poisson distribution, for each $i\in \mathbb{N}$.
\item If $\{p^{(i)}_{r,N}\}_{i\in \mathbb{N}}$ is an orthogonal sequence of  projections, for $\forall N, r=1,2,\cdots, N$, then
$$\kappa_n(b_{i(1)}b_{i(2)}\cdots b_{i(n)})=\lim_{k\rightarrow \infty}n_k\varphi_{n_k}(\alpha_{i(1)}\cdots\alpha_{i(n)}p^{(i(1))}_{r,n_k}\cdots p^{(i(n))}_{ r,n_k})=0,$$ whenever there are $i(j)\ne i(l), 0\le j,l\le n$. This means that $\{b_i:i\in \mathbb{N}\}$ is a free family of free Poisson random variables. A similar procedure of constructing a free family from an orthogonal one can be found in Example 12.19 in \cite{NS}.
\end{enumerate}
\end{Remark}
We, therefore, define multidimensional free Poisson distributions as follows.
\begin{Definition} A family of random variables $(b_i)_{i\in \mathbb{N}}$ in a non-commutative probability space $(\A,\varphi)$ has a joint free Poisson distribution if the family has a joint distribution same as the limit distribution in Theorem 2.4.
\end{Definition}

\section{Free Poisson Processes}
 An analogue of classical Poisson processes in free probability can defined as follows.
 \begin{Definition} For $k\in \mathbb{N}$ and $\alpha\in \mathbb{R}$. A family $\{X_t:t\ge 0\}$ of self-adjoint elements in a $*$-non-commutative probability space $(\mathcal{A}, \varphi)$
 is a free Poisson process if it satisfies the following conditions.
 \begin{enumerate}
 \item $X_0=0$.
 \item For $0\le t_1<t_2<\cdots <t_n<\infty$, $X_{t_n}-X_{t_{n-1}}, \cdots, X_{t_2}-X_{t_1}$ form a freely independent family.
 \item For $0\le s< t$, $X_t-X_s$ has a free Poisson distribution with parameters $\lambda=k(t-s)$ and $\alpha$, that is, $\kappa_n(X_t-X_s)=k(t-s)\alpha^n, n=1,2,\cdots.$  (The most common case is that $k=1$.)
 \end{enumerate}
 \end{Definition}
 {\bf Construction} We give a procedure for constructing a free Poisson process in a  $*$-non-commutative probability space, for a real number $\alpha$.
\begin{enumerate}
\item For each natural number $N$, let $t\mapsto p_{t,N}$ be a projection-values process $[0,N]\rightarrow \mathcal{A}$, where $(\A,\varphi)$ be a $W^*$-probability space. That is, for $0\le t_1<t_2<\cdots <t_n\le N$,
$\{p_{t_2,N}-p_{t_1,N}, p_{t_3,N}-p_{t_2,N},\cdots,
p_{t_n,N}-p_{t_{n-1},N}\}$ is an orthogonal family of projections, and
$\varphi (p_{t,N})=\frac{t}{N},\forall 0\le t\le N$. Actually, we
can get such a process by letting
$p_t-p_s=p_{[\frac{s}{N},\frac{t}{N})}$, for $0\le s\le t\le N$ in
the projection-valued process in Section 4.2 of \cite{MA}.
\item Choose $N$ free copies of the process in Step 1. That is,  processes $\{p_{t,N}^i:0\le t\le N\}, i=1,2, \cdots, N$, are $N$ free families of random variables, and   $\varphi(p_{t,N}^i)=\frac{t}{N}$ $i=1,2,\cdots, N$.
\item Let $a_{t,N}=\alpha \sum_{i=1}^Np_{t,N}^i$. Then the limit in distribution of $a_{t,N}$, as $N\rightarrow \infty$,  has free cumulants $\kappa_n(a_t)=t\alpha^n,\forall n\ge 1$, by Proposition 12.11 in \cite{NS}.
\item By Exercise 16.21 or Theorem 21.7 in \cite{NS}, there is a family $\{a_t:t\ge 0\}$ in $\mathcal{A}$ such that $\kappa_n(a_t)=t\alpha^n, \forall n\ge 1$
 (If necessary, we can expand $\mathcal{A}$ so that $\mathcal{A}$ contains all limit elements $\{a_t, t\ge 0\}$).
\end{enumerate}
Now we show that $\{a_t:t\ge 0\}$ is a free Poisson process with parameter $\alpha$.
\begin{Theorem} The process $\{a_t:t\ge 0\}$ constructed via the above procedure is a free Poisson process in $(\mathcal{A},\varphi)$.
\end{Theorem}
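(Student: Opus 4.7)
The plan is to verify the three conditions of Definition 3.1 by applying the multidimensional free Poisson limit theorem (Theorem 2.4) and Remark 2.5 to the increments of the constructed process. Condition (1), $a_0=0$, is immediate: since $\varphi(p_{0,N}^i)=0$ and $\varphi$ is a faithful (tracial) state on the $W^*$-probability space, each $p_{0,N}^i=0$, hence $a_{0,N}=0$ for every $N$, and $a_0=0$ in the limit.

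For conditions (2) and (3), I would fix an arbitrary partition $0=t_0<t_1<t_2<\cdots<t_n$ and, for every $N$ and $i=1,\dots,N$, introduce the increment projections $q_{j,N}^i:=p_{t_j,N}^i-p_{t_{j-1},N}^i$ for $j=1,\dots,n$. Step 1 of the construction guarantees that $\{q_{1,N}^i,\dots,q_{n,N}^i\}$ is an orthogonal family of projections with $\varphi(q_{j,N}^i)=(t_j-t_{j-1})/N$, and Step 2 guarantees that the $N$ families $\{q_{j,N}^i\}_{j=1}^n$ indexed by $i=1,\dots,N$ are freely independent and identically distributed. Thus the rescaled sums
$$b_{j,N}:=a_{t_j,N}-a_{t_{j-1},N}=\alpha\sum_{i=1}^N q_{j,N}^i$$
fit exactly into the hypotheses of Theorem 2.4 with parameters $\lambda_j=t_j-t_{j-1}$ and $\alpha_j=\alpha$. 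Applying Theorem 2.4 yields a subsequence $(n_k)$ and an $n$-tuple $(b_1,\dots,b_n)$ in some $(\A,\varphi)$ to which $(b_{1,n_k},\dots,b_{n,n_k})$ converges in distribution. Remark 2.5(2), applicable precisely because the $q_{j,N}^i$ are orthogonal in $j$ for each fixed $i$, gives that $\{b_1,\dots,b_n\}$ is a free family, and Remark 2.5(1) supplies the individual cumulants $\kappa_m(b_j)=(t_j-t_{j-1})\alpha^m$, so each $b_j$ is free Poisson with the required parameters.

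The remaining point is the identification $b_j=a_{t_j}-a_{t_{j-1}}$ in joint distribution. The family $\{a_t\}$ constructed in Step 4 via Theorem 21.7 of \cite{NS} is specified by its joint free cumulants, and those joint cumulants are in turn determined by the limit formula in Theorem 2.4, which is independent of the subsequence $(n_k)$. Consequently the joint distribution of the increments $(a_{t_1},a_{t_2}-a_{t_1},\dots,a_{t_n}-a_{t_{n-1}})$ coincides with that of $(b_1,\dots,b_n)$, and conditions (2) and (3) of Definition 3.1 follow. The main obstacle is not computational but conceptual: one must argue that the abstract realization in Step 4 can be taken to match the concrete limit produced by Theorem 2.4, which reduces to the uniqueness of joint distributions from joint free cumulants in a tracial setting.
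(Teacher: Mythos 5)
Your proposal is correct and follows essentially the same route as the paper: the paper likewise obtains $\kappa_n(a_t-a_s)=(t-s)\alpha^n$ from the free Poisson limit theorem applied to $\alpha\sum_{i=1}^N(p^i_{t,N}-p^i_{s,N})$ and derives free independence of the increments from the orthogonality of the increment projections via Remark 2.5. Your write-up is somewhat more careful on two points the paper leaves implicit (the verification that $a_0=0$ and the identification of the abstract realization in Step 4 with the limit joint distribution), but the underlying argument is the same.
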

\begin{proof} For $0\le s<t$, choose $N>t$, and consider $a_{t,N}-a_{s,N}=\alpha\sum_{i=1}^N(p_{t,N}^i-p_{s,N}^i)$. By the proof of Proposition 12.11 in \cite{NS}, the $n$-th free cumulant of
$a_{t,N}-a_{s,N}$ is $(t-s)\alpha^n+O(\frac{1}{N})$. Thus,
 $$\kappa_n(a_t-a_s)=\lim_{N\rightarrow \infty}((t-s)\alpha^n+O(\frac{1}{N}))=(t-s)\alpha^n,\forall n\ge 1.$$
Moreover, by Remark 2.5, for $0\le t_1<t_2<\cdots <t_n<\infty$, $a_{t_n}-a_{t_{n-1}}, \cdots a_{t_2}-a_{t_1}$ form a freely independent family.
\end{proof}
\begin{Remark}
\begin{enumerate}
\item  In Section 4.2 of \cite{MA}, Anshelevich constructed a free Poisson process as follows. For a projection-valued process $I\mapsto p_I$, from
 half-open intervals $I\subset [0,1]$ into projections in a $W^*$-probability space$(\mathcal{A}, \varphi)$, and a standard semicircle element $s\in \mathcal{A}$, which is free from $\{p_I:I\subset [0,1]\}$, $I\mapsto sp_Is$ is a free Poisson process with $\kappa_n(sp_Is)=|I|$. Let $I=[0,t)$. Then $\kappa_n(sp_Is)=t$ ($t>0$). This is a special case of free Poisson processes with $\alpha=1$.
 \item When combining  the construction in Remark 1.9 in \cite{NS1} and that in Section 4.2 in \cite{MA}, we can get a free Poisson process $I\mapsto sp_Is$ for a general semicircle element $s\in \mathcal{A}$ with radius $r$. The free Poisson process $I\mapsto sp_Is$ has $n$-th cumulant $\kappa_n(sp_Is)=|I|\frac{r^{2n}}{4^n}$. This is a free Poisson process with $\alpha=\frac{r^2}{4}>0$.
 \item One can get a free Poisson process by our procedure for any $\alpha\in \mathbb{R}$. \end{enumerate}
\end{Remark}

A generalized version of free Poisson distributions is the following {\sl compound free Poisson distributions}.
\begin{Definition}[Definition 12.16 in \cite{NS}]
Let $\nu$ be a compactly supported probability measure on $\mathbb{R}$, and $\lambda\ge 0$. A probability measure $\mu$ on $\mathbb{R}$ is called a {\sl{\bf compound free Poisson distribution}} with rate $\lambda$ and jump distribution $\nu$ if the $n$-th free cumulant of $\mu$ is $\kappa_n(\mu)=\lambda m_n(\nu)$, where $m_n(\nu)$ is the $n$-th moment of measure $\nu$, and $n\ge 1$.
\end{Definition}
Now we generalize the notion of free Poisson processes to a {\sl compound} version.

\begin{Definition} Let $\nu$ be a compactly supported probability measure on $\mathbb{R}$ and $k\in \mathbb{N}$. A family $\{a_I:I=[s,t)\subset [0,\infty)\}$ of self-adjoint elements in a $*$-non-commutative probability space $(\mathcal{A},\varphi)$ is a compound free Poisson process with respect to  $\nu$, if it satisfies the following conditions.
\begin{enumerate}
\item $a_I$ has a compound free Poisson distribution: $\kappa_n(a_I)=k|I|m_n(\nu), n\ge 1$.
\item If $I_1, I_2, \cdots, I_n$ are mutually disjoint half-open intervals, then $a_{I_1}, a_{I_2}, \cdots, a_{I_n}$ form a freely independent family.
\end{enumerate}
\end{Definition}
{\bf Construction} For a compactly supported probability measure $\nu$ on $\mathbb{R}$, the construction for a compound free Poisson process is very similar to that for a free Poisson process.
\begin{enumerate}
\item For each natural number $N$, choose a projection-valued process $I\mapsto p_{I,N}$, from half-open intervals $I\subset [0,N]$ into a $W^*$-probability space $(\mathcal{A},\varphi)$ such that $\varphi(p_{I,N})=\frac{|I|}{N}$, as we did in the previous construction for a free Poisson process. Let $p_{t,N}=p_{[0,t), N}$.
\item For a half-open interval $I=[s,t)$, where $0\le s<t$, choose a natural number $N>t$, and a self-adjoint  element $a_{I,N}$ in
$(\mathcal{A}_I,\varphi_I)$ such that $\varphi_I (a_{I,N}^n)=m_n(\nu)$, where $\mathcal{A}_I=p_{I,N}\mathcal{A}p_{I,N}$, $\varphi_I(x)=\frac{\varphi(x)}{\varphi(p_{I,N})}$, for $x\in \mathcal{A}_I$, and $m_n(\nu)$ is the $n$-th moment of measure $\nu$. Then $\varphi(a_{I,N}^n)=\varphi(p_{I,N})m_n(\nu)=\frac{t-s}{N}m_n(\nu), n\ge 1$.
\item As the Step 2 in the previous construction, for each natural number $N$, choose $N$ processes $\{a_{I,N}^{(i)}:I=[s,t)\subset [0,N]\}, i=1,2, \cdots, N$,
 that is, $\varphi((a_{I,N}^{(i)})^n)=\frac{|I|}{N}m_n(\nu), n\ge 1$,  $i=1,2,\cdots, N$. Also, the $N$ processes are freely independent from each other.
\item Let $b_{I,N}=\sum_{i=1}^Na_{I,N}^{(i)}$, $I\subset [0,N]$, and $N=1, 2, \cdots$. By the proof of Proposition 12.11 in \cite{NS},
 $\lim_{N\rightarrow \infty}\kappa_n(b_{I,N})=|I|m_n(\nu), n\ge 1$.
 \item Let $\{b_I:I=[s,t), 0\le s<t<\infty\}$ be the family of random variables with the distribution $$\kappa_n(b_t)=(t-s)m_n(\nu), n\ge 1.$$
\end{enumerate}
\begin{Theorem} $\{b_I:I=[s,t), 0\le s<t\}$ constructed above is a compound free Poisson process.
\end{Theorem}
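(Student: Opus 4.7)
The plan is to verify the two conditions of Definition 3.5 for the family $\{b_I\}$ produced by the construction, by applying the multidimensional free Poisson limit theorem (Proposition 2.2) with the index set taken to be the collection of half-open intervals. For each $N$, the $N$ families $\{a_{I,N}^{(r)}:I=[s,t)\subset [0,N)\}$, $r=1,2,\ldots,N$, are freely independent and identically distributed in $(\mathcal{A}_N,\varphi_N)$ by Step 3 of the construction, and we sum them to get $b_{I,N}=\sum_{r=1}^N a_{I,N}^{(r)}$. Proposition 2.2 will then identify the joint cumulants of the limit family as limits of $N\varphi_N$ applied to $r$-th-copy products.

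The crux is to compute $N\,\varphi_N(a_{I_{j(1)},N}^{(r)}\,a_{I_{j(2)},N}^{(r)}\cdots a_{I_{j(n)},N}^{(r)})$ for tuples drawn from a family $I_1,\ldots,I_s$ of mutually disjoint intervals. By Step 2, each $a_{I,N}^{(r)}=p_{I,N}^{(r)}a_{I,N}^{(r)}p_{I,N}^{(r)}$ is supported in the corner cut out by $p_{I,N}^{(r)}$, and by Step 1 the projections $p_{I,N}^{(r)}$ and $p_{J,N}^{(r)}$ are orthogonal whenever $I\cap J=\emptyset$; hence $a_{I,N}^{(r)}a_{J,N}^{(r)}=0$ for disjoint $I,J$. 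Thus whenever some adjacent pair satisfies $j(k)\ne j(k+1)$ the operator product already vanishes and the moment is zero. When the tuple is constant, $j(1)=\cdots=j(n)=j$, Step 2 gives $\varphi_N((a_{I_j,N}^{(r)})^n)=\varphi_N(p_{I_j,N}^{(r)})m_n(\nu)=\tfrac{|I_j|}{N}m_n(\nu)$. In either case the quantity $N\varphi_N(\cdots)$ is independent of $N$ for all sufficiently large $N$, so condition (2) of Proposition 2.2 is met along the full sequence $n_k=k$.

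Proposition 2.2 then yields
$$\kappa_n(b_{I_{j(1)}}\,b_{I_{j(2)}}\cdots b_{I_{j(n)}})=\begin{cases} |I_j|\,m_n(\nu), & j(1)=\cdots=j(n)=j,\\ 0, & \text{otherwise.}\end{cases}$$
The diagonal case is exactly condition (1) of Definition 3.5 (with $k=1$), namely $\kappa_n(b_I)=|I|m_n(\nu)$. The off-diagonal case says that all mixed joint free cumulants of $b_{I_1},\ldots,b_{I_s}$ vanish, which by Theorem 1.1 (Theorem 11.20 in \cite{NS}) is equivalent to their being freely independent, giving condition (2). I do not foresee any serious obstacle; the proof is a direct assembly of Proposition 2.2 and Theorem 11.20 in \cite{NS}, with the only real input being the orthogonal-corner identity $a_{I,N}^{(r)}a_{J,N}^{(r)}=0$ for disjoint $I,J$, which is immediate from the construction.
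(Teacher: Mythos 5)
Your proposal is correct and follows essentially the same route as the paper: both arguments use the fact that $a_{I,N}^{(r)}$ lives in the corner $p_{I,N}\mathcal{A}p_{I,N}$, so that products over disjoint intervals vanish, then invoke the multidimensional limit theorem (Proposition 2.2 / Theorem 13.1 in \cite{NS}) to conclude that all mixed free cumulants of the $b_I$ vanish, and finally apply Theorem 11.20 in \cite{NS} to get free independence, with the diagonal cumulants $\kappa_n(b_I)=|I|m_n(\nu)$ coming from the construction. Your write-up is if anything slightly more careful than the paper's, since you note explicitly that a non-constant index tuple must contain an adjacent distinct pair, which is what actually kills the operator product.
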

\begin{proof}
By the construction, $\kappa_n(b_I)=|I|m_n(\nu), n\ge 1$, that is, $b_I$ has a compound free Poisson distribution.

For a family $\{I_1, I_2, \cdots, I_k\}$ of mutually disjoint
half-open intervals, choose a natural number $N$ such that
$I_j\subset [0,N], j=1, 2, \cdots, k$. For every $0<r\le N$,
$a_{I_j,N}^{(r)}\in
\mathcal{A}_{I_j,N}=p_{I_j,N}\mathcal{A}p_{I_j,N}$, $j=1, 2, \cdots,
N$. Thus, $\prod_{j=1}^ka_{I_j,N}^{(r)}=0$. By Theorem 13.1 in
\cite{NS},
$$\kappa_l(b_{I_{i(1)}},b_{I_{i(2)}},\cdots b_{I_{i(l)}})=\lim_{N\rightarrow\infty}N\varphi (\prod_{j=1}^la_{I_{i(j)},N}^{(r)})=0, $$
 if there are two disjoint intervals in $\{I_{i(1)}, I_{i(2)},\cdots, I_{i(l)} \}\subseteq \{I_1, I_2, \cdots, I_k\}, \forall 1< l\le k$.
It follows from Theorem 11.20 in \cite{NS} that $b_{I_1}, b_{I_2}, \cdots, b_{I_k}$ form a freely independent family.
\end{proof}

In classical probability, the sum of two independent Poisson processes is still a Poisson process (see Section 2.3 in \cite{RG}). In free probability, we have a slightly different result.
\begin{Theorem} Let $\{b_{I,i}:I=[s,t)\subset [0,\infty)\}, i=1,2,\cdots, k$, be a freely independent family of $k$ compound free Poisson processes
such that $\kappa_n(b_{I,i})=|I|m_n(\nu_i)$, where $\nu_i$ is a
compactly supported probability measure on $\mathbb{R}$, $i=1, 2,
\cdots, k$. Then the sum $b_I=\sum_{i=1}^kb_{I,i}$ is a compound
free Poisson process with distribution $\kappa_n(b_I)=k|I|m_n(\nu),
n\ge 1$, where $\nu=\sum_{i=1}^k\frac{\nu_i}{k}$.
\end{Theorem}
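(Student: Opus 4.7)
The plan is to verify the two conditions of Definition 3.5 directly, by first computing the free cumulants of $b_I$ from those of the summands, and then establishing free independence over disjoint intervals through the vanishing-of-mixed-cumulants criterion (Theorem 1.1).

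For condition (1) on the distribution of $b_I$, since the processes $\{b_{I,i}\}_I$, $i = 1, \ldots, k$, are freely independent by hypothesis, Theorem 1.1 yields additivity of free cumulants, so
$$\kappa_n(b_I) = \sum_{i=1}^{k} \kappa_n(b_{I,i}) = |I| \sum_{i=1}^{k} m_n(\nu_i).$$
Because the moment map $\nu \mapsto m_n(\nu)$ is linear in $\nu$ and $\nu = \frac{1}{k}\sum_{i=1}^{k} \nu_i$, we have $\sum_{i=1}^{k} m_n(\nu_i) = k\, m_n(\nu)$, so $\kappa_n(b_I) = k|I|\, m_n(\nu)$, which is exactly a compound free Poisson distribution with rate $k|I|$ and jump distribution $\nu$.

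For condition (2), I would fix mutually disjoint half-open intervals $I_1, \ldots, I_m$ and appeal again to Theorem 1.1, which reduces the claim of freeness to showing that every mixed cumulant $\kappa_n(b_{I_{j_1}}, \ldots, b_{I_{j_n}})$ vanishes whenever the indices $j_\ell \in \{1,\ldots, m\}$ are not all equal. Expanding by multilinearity of the cumulant functional,
$$\kappa_n\bigl(b_{I_{j_1}}, \ldots, b_{I_{j_n}}\bigr) = \sum_{i_1, \ldots, i_n = 1}^{k} \kappa_n\bigl(b_{I_{j_1}, i_1}, \ldots, b_{I_{j_n}, i_n}\bigr),$$
and I would kill each summand by a two-step application of Theorem 1.1. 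First, since the subalgebras $\mathcal{B}_i := \mathrm{alg}\{b_{I,i}\}_I$, $i=1,\ldots,k$, are freely independent (by hypothesis) and $b_{I_{j_\ell}, i_\ell} \in \mathcal{B}_{i_\ell}$, the cumulant vanishes unless all $i_\ell$ are equal to a common index $i$. Second, in that case, because $\{b_{I,i}\}_I$ is itself a compound free Poisson process, the unital subalgebras generated by $b_{I_{j_\ell}, i}$ for distinct disjoint intervals are freely independent, so the cumulant vanishes further unless all $j_\ell$ coincide. Combining the two steps shows the summand is zero whenever $j_\ell$ are not all equal, hence so is the total cumulant.

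I do not anticipate a substantial technical obstacle; the argument is a structural calculation. The only subtle point is being careful that the two invocations of Theorem 1.1 are applied in the correct order, first exploiting freeness \emph{across} the $k$ processes (the outer free family) and then freeness \emph{within} each individual process (the inner compound free Poisson structure). This two-tier approach avoids having to prove any auxiliary ``associativity of freeness'' lemma and keeps the proof entirely within the scope of Theorem 1.1 combined with multilinearity of free cumulants.
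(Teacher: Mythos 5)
Your proposal is correct and follows essentially the same route as the paper: additivity of free cumulants over the free family for condition (1), and a multilinear expansion of mixed cumulants over disjoint intervals, killed by the two tiers of freeness (across the $k$ processes, then within each process), for condition (2). Your write-up is in fact more explicit about the two-step vanishing argument than the paper's rather terse displayed sum.
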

\begin{proof}
For a half-open interval $I=[s,t)$, operators $b_{I,1}, b_{I,2}, \cdots, b_{I,k}$ are freely independent. Therefore,
$$\kappa_n(b_I)=\sum_{i=1}^k\kappa_n(b_{I,i})=\sum_{i=1}^k|I|m_n(\nu_i)=k|I|\sum_{i=1}^k\frac{m_n(\nu_i)}{k}=k|I|m_n(\nu), n\ge 1.$$
For mutually disjoint intervals $I_1, I_2, \cdots, I_l$ of $[0,\infty)$ and $n\le l$, we have
$$\kappa_n(b_{I_1},b_{I_2},\cdots, b_{I_n})=\sum_{i_1,i_2, \cdots, i_n=1;i_p\ne i_{p'},\forall p\ne p'}^n\sum_{j_1,j_2,\cdots, j_n=1}^k\kappa_n(b_{I_{i_1},j_1}, b_{I_{i_2},j_2},\cdots, b_{I_{i_n}, j_n}).$$
It implies that $b_{I_1}, b_{I_2}, \cdots, b_{I_l}$ are freely independent.
\end{proof}
\begin{Corollary} Let $\{a_{t,1}:t\ge 0\}$ and $\{a_{t,2}:t\ge 0\}$ be two freely independent free Poisson processes with distributions $\kappa_n(a_{t,i})=t\alpha_i^n, n\ge 1, i=1,2$. Then $\{a_t=a_{t,1}+a_{t,2}:t\ge 0\}$ is compound free Poisson process. Moreover, the sum process
$\{a_t:t\ge 0\}$ of two non-zero free Poisson processes (that is, $(\alpha_1)^2 +(\alpha_2)^2\ne 0$) is a free Poisson process if and only if $\alpha_1=\alpha_2$.
\end{Corollary}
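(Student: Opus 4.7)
The plan is to obtain the first assertion as a direct consequence of Theorem 3.7, and then reduce the ``iff'' part to an elementary comparison of power sums $\alpha_1^n + \alpha_2^n$ with a single geometric sequence $k\beta^n$.

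First I would observe that a free Poisson process with $\kappa_n(a_{t,i}) = t\alpha_i^n$ fits Definition 3.5 as a compound free Poisson process with rate factor $k = 1$ and jump distribution $\nu_i = \delta_{\alpha_i}$, the point mass at $\alpha_i$; indeed $m_n(\delta_{\alpha_i}) = \alpha_i^n$. Thus Theorem 3.7 applies to the freely independent pair $\{a_{t,1}\}$, $\{a_{t,2}\}$ and identifies $\{a_t\}$ as a compound free Poisson process with jump distribution $\nu = \tfrac{1}{2}(\delta_{\alpha_1} + \delta_{\alpha_2})$ and rate factor $2$; explicitly
$$\kappa_n(a_t) = 2t \cdot m_n(\nu) = t(\alpha_1^n + \alpha_2^n), \quad n \geq 1.$$
This settles the first assertion.

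For the ``if'' direction of the characterization, $\alpha_1 = \alpha_2 = \alpha$ gives $\kappa_n(a_t) = 2t\alpha^n$, which matches Definition 3.1 with integer rate $k = 2$ and jump $\alpha$. For the ``only if'' direction I would assume $\{a_t\}$ is a free Poisson process in the sense of Definition 3.1, so there exist $k \in \mathbb{N}$ and $\beta \in \mathbb{R}$ with $\alpha_1^n + \alpha_2^n = k\beta^n$ for every $n \geq 1$. Comparing the $n = 1$ and $n = 2$ identities yields $(\alpha_1 + \alpha_2)^2 = k(\alpha_1^2 + \alpha_2^2)$. The elementary inequality $(\alpha_1 - \alpha_2)^2 \geq 0$ gives $(\alpha_1 + \alpha_2)^2 \leq 2(\alpha_1^2 + \alpha_2^2)$, forcing $k \in \{1,2\}$. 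The case $k = 1$ produces $\alpha_1\alpha_2 = 0$, which is excluded since both processes are nontrivial; hence $k = 2$, and equality in the preceding inequality forces $\alpha_1 = \alpha_2$.

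The only delicate point is reading the hypothesis: I am interpreting ``two non-zero free Poisson processes'' as $\alpha_1 \neq 0$ and $\alpha_2 \neq 0$ individually, since the weaker condition $\alpha_1^2 + \alpha_2^2 \neq 0$ alone would allow one summand to vanish, in which case the sum is trivially free Poisson without requiring $\alpha_1 = \alpha_2$. Modulo this clarification, there is no real obstacle; the argument is purely algebraic and the substantive analytic work has already been absorbed into Theorem 3.7.
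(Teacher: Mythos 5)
Your proof is correct, and for the ``only if'' direction it takes a genuinely different route from the paper. The paper first converts the question into the jump-measure language (``the sum is free Poisson iff $\delta_{\alpha_1}+\delta_{\alpha_2}=2\delta_\alpha$ for some $\alpha$''), thereby silently fixing the candidate rate at $2t$, and then rules out $\alpha_1\ne\alpha_2$ by an asymptotic analysis: it normalizes to $1+(\alpha_2/\alpha_1)^n=2(\alpha/\alpha_1)^n$ and compares limits as $n\to\infty$ through a case analysis on $|\alpha|$ versus $|\alpha_1|$ and on $|\alpha_1|$ versus $|\alpha_2|$ (plus the case $\alpha_1=-\alpha_2$). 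You instead allow an arbitrary representation $\alpha_1^n+\alpha_2^n=k\beta^n$ with $k\in\mathbb{N}$ as in Definition 3.1, and close the argument using only the $n=1,2$ cumulants together with $(\alpha_1-\alpha_2)^2\ge 0$: this pins down $k\in\{1,2\}$, eliminates $k=1$ (both $\alpha_i$ nonzero), and forces $\alpha_1=\alpha_2$ from the equality case when $k=2$. Your version is shorter, avoids limits entirely, and addresses a point the paper's proof leaves implicit, namely why the rate of the sum, if it is free Poisson, must be $2t$ rather than $kt$ for some other $k$. Two remarks. First, your reading of the hypothesis is the right one and your $k=1$ branch is exactly where it matters: with only $\alpha_1^2+\alpha_2^2\ne 0$, the choice $\alpha_1\ne 0=\alpha_2$ makes $a_t=a_{t,1}$ a bona fide free Poisson process (rate $k=1$) with $\alpha_1\ne\alpha_2$, so the statement needs both $\alpha_i\ne 0$; the paper's proof does not detect this because it never considers rates other than $2$. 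Second, your argument does lean on Definition 3.1's requirement that the rate constant be a natural number; if one admitted an arbitrary real rate $\lambda>0$, the two-cumulant computation would no longer suffice (e.g.\ $\alpha_1=2$, $\alpha_2=1$ matches $\lambda\beta^n$ for $n=1,2$ with $\lambda=9/5$) and one would need a third cumulant or an asymptotic argument of the paper's type. As stated, though, your use of $k\in\mathbb{N}$ is legitimate and the proof stands.
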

\begin{proof}
The first conclusion that the sum of two freely independent free Poisson processes is a compound free Poisson process follows from the previous result, Theorem 3.7. More precisely, the sum process has a distribution $\kappa_n(a_t)=2tm_n(\nu), n\ge 1$, where $\nu=\frac{1}{2}(\delta_{\alpha_1}+\delta_{\alpha_2})$, and $\delta_\alpha$ is the point mass distribution at number $\alpha$ (see Example 3.4.1 in \cite{VDN}). Note that a compound free Poisson distribution is a  free Poisson distribution if and only if the probability measure $\nu$ in the definition of compound free Poisson distributions is a point mass distribution $\delta_\alpha$. If $\alpha_1=\alpha_2$, then $\delta_{\alpha_1}+\delta_{\alpha_2}=2\delta_\alpha$, where $\alpha=\alpha_1=\alpha_2$. Therefore, $$\kappa_n(a_{t,1}+a_{t,2})=t(\alpha_1^n+\alpha_2^n)=2t\alpha^n,\forall t\ge0, n\ge 1.$$ It follows that the sum process is a free Poisson process.

 Suppose now that  $\delta_{\alpha_1}+\delta_{\alpha_2}=2\delta_{\alpha}$, for some $\alpha\in \mathbb{R}$, we will show that $\alpha_1=\alpha_2$.
 Suppose $\alpha_1\ne \alpha_2$.  If $|\alpha_1|>|\alpha_2|$, then for $t>0$ and all $n\in \mathbb{N}$, we have
$$\kappa_n(a_{t,1}+a_{t,2})=t(\alpha_1^n+\alpha_2^n)=2t\alpha^n\Rightarrow 1+(\frac{\alpha_2}{\alpha_1})^n=2(\frac{\alpha}{\alpha_1})^n.$$
Let $n\rightarrow \infty$, we have the limit of the left side
sequence is $1$. On the other hand, if $|\alpha|>|\alpha_1|$, the
limit of the right side sequence is $\pm\infty$; if
$|\alpha|<|\alpha_1|$, the limit of the right side sequence is $0$;
if $\alpha=-\alpha_1$, the limit of the right side sequence $(-1)^n$
does not exist. All these cases lead to a contradiction. It implies
that $\alpha=\alpha_1$. Thus, $\lim_{n\rightarrow
\infty}2(\frac{\alpha}{\alpha_1})^n=2$. This contradicts to the fact
that the limit on the left is 1. Hence, the assumption
$|\alpha_1|>|\alpha_2|$ is wrong. Very similarly, we can prove  that
the assumptions $|\alpha_1|<|\alpha_2|$ and $\alpha_1=-\alpha_2$
lead contradictions. Therefore, if the sum is a free Poisson
process, then $\alpha_1=\alpha_2$.
\end{proof}
\begin{Remark}
\begin{enumerate}
\item A similar result about the sum of free Poisson distributions can be found in Exercise 12.25 in \cite{NS}.
\item Combining the construction after the definition of free Poisson processes and the above corollary, we can construct a free Poisson process $\{a_t:t\ge 0\}$ such that $\kappa_n(a_t)=kt\alpha^n, n\ge 1$, for $k\in \mathbb{N}$ and $\alpha\in \mathbb{R}$.
\end{enumerate}
\end{Remark}
\section{The Karhunen-Loeve expansion of a free Poisson Process}
\subsection{The Karhunen-Loeve expansion of a classical stochastic process}
First let's recall the Karhunen-Loeve Expansion of a classical stochastic process (Sections 2.3.6, 2.3.7 in \cite{DJ} and \cite{AA}).

\begin{Definition}[\cite{PB}]
Let $T\in \mathbb{R}, T>0$. A function $K(s,t):[0,T]\times [0,T]\rightarrow \mathbb{R}$ is called a kernel if
\begin{enumerate}
\item $K$ is symmetric, that is,  $K(s,t)=K(t,s),\forall 0\le s, t\le T$, and
\item $K$ is non-negative definite,   $$\sum_{i,j=1}^nc_ic_jK(t_i, t_j)\ge 0,\forall t_1, t_2, \cdots, t_n\in [0,T], c_1, c_2, \cdots, c_n\in \mathbb{R}.$$
\end{enumerate}
\end{Definition}

\begin{Theorem}[Mercer's theorem, \cite{PB}] Suppose $K:[0,T]\times [0,T]\rightarrow \mathbb{R}$ is a continuous kernel. Then the integral operator $T_K:L^2([0,T])\rightarrow L^2([0,T])$, $T_K(f)(x)=\int_0^TK(x,y)f(y)dy,\forall f\in L^2([0,T])$ is non-negative definite, and there is an orthonormal basis $\{\phi_i:i=1, 2, \cdots\}$ of $L^2([0,T])$ consisting of eigenfunctions of $T_K$ such that the corresponding eigenvalues $\{\lambda_i\}$ are non-negative. The eigenfunctions corresponding to non-zero eigenvalues are continuous on $[0,T]$ and the kernel $K$ has the form $$K(s,t)=\sum_{i=1}^\infty\lambda_i\phi_i(s)\phi_i(t),$$ where the convergence is absolute and uniform, that is, $$\lim_{n\rightarrow \infty}\sup_{s,t\in [0,T]}|K(s,t)-\sum_{i=1}^n\lambda_i\phi_i(s)\phi_i(t)|=0.$$
\end{Theorem}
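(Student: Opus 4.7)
The plan is to reduce Mercer's theorem to the spectral theorem for compact self-adjoint operators, and then to promote the resulting $L^2$-expansion into a uniformly convergent series via a Dini-type argument on the diagonal.

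First I would verify the operator-theoretic hypotheses. Since $K$ is continuous on the compact square $[0,T]^2$, it is bounded and uniformly continuous; consequently $T_K$ is Hilbert-Schmidt on $L^2([0,T])$ and hence compact. The symmetry $K(s,t)=K(t,s)$ makes $T_K$ self-adjoint. The non-negative definiteness of $K$, applied to Riemann sums of a continuous $f$ and then extended to all of $L^2$ by density, gives $\langle T_K f,f\rangle\ge 0$, so $T_K$ is a non-negative operator. The spectral theorem then supplies an orthonormal basis $\{\phi_i\}$ of $L^2([0,T])$ of eigenfunctions with eigenvalues $\lambda_i\ge 0$ satisfying $\lambda_i\to 0$. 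Continuity of each $\phi_i$ with $\lambda_i>0$ is immediate from the identity $\phi_i(x)=\lambda_i^{-1}\int_0^T K(x,y)\phi_i(y)\,dy$ together with the uniform continuity of $K$ and the Cauchy-Schwarz inequality.

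Next I would establish the expansion on the diagonal. Define the truncation remainder $R_n(s,t):=K(s,t)-\sum_{i=1}^n\lambda_i\phi_i(s)\phi_i(t)$. A direct computation shows that $R_n$ is again a continuous symmetric kernel whose associated integral operator has eigenvalues $\{\lambda_i\}_{i>n}$, all non-negative; hence $R_n$ is itself a non-negative definite kernel, and in particular $R_n(s,s)\ge 0$ for every $s\in[0,T]$. This yields the Bessel-type bound $\sum_{i=1}^n\lambda_i\phi_i(s)^2\le K(s,s)$, so the series $\sum_i\lambda_i\phi_i(s)^2$ converges for every $s$. For fixed $s$, the function $K(s,\cdot)$ lies in $L^2([0,T])$ with Fourier coefficients $\lambda_i\phi_i(s)$, so its $L^2$-expansion is $\sum_i\lambda_i\phi_i(s)\phi_i(\cdot)$; comparing this with the continuous representative $K(s,\cdot)$ forces $\sum_i\lambda_i\phi_i(s)^2=K(s,s)$ pointwise. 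Since the partial sums are continuous, monotone increasing in $n$, and converge pointwise to a continuous limit on the compact set $[0,T]$, Dini's theorem yields uniform convergence of $\sum_i\lambda_i\phi_i(s)^2$ to $K(s,s)$.

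Finally, I would pass from the diagonal to the off-diagonal statement using Cauchy-Schwarz: for $m>n$,
$$\Bigl|\sum_{i=n+1}^m\lambda_i\phi_i(s)\phi_i(t)\Bigr|^2\le \Bigl(\sum_{i=n+1}^m\lambda_i\phi_i(s)^2\Bigr)\Bigl(\sum_{i=n+1}^m\lambda_i\phi_i(t)^2\Bigr).$$
By the uniform convergence just obtained, both factors on the right are uniformly small for $n$ large, so the partial sums of $\sum_i\lambda_i\phi_i(s)\phi_i(t)$ form a uniform Cauchy sequence on $[0,T]\times[0,T]$. Their limit is continuous and agrees with $K(s,t)$ in the $L^2$ sense in each variable, so by continuity the limit equals $K(s,t)$ everywhere. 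The same Cauchy-Schwarz estimate, applied with absolute values inside, also establishes absolute convergence.

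The main obstacle is the passage from pointwise summability on the diagonal to the identification $\sum_i\lambda_i\phi_i(s)^2=K(s,s)$: the spectral theorem gives convergence only in $L^2$, so one must exploit the continuity of both sides and the representative obtained from the continuity of $K(s,\cdot)$ to pin down the pointwise value. Once that identification is made, Dini's theorem and Cauchy-Schwarz finish the argument in a routine way.
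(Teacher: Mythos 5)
The paper offers no proof of this statement: it is quoted verbatim as Mercer's theorem from \cite{PB} and used as a black box in the proof of the Karhunen--Loeve expansion (Theorem 4.5), so there is nothing internal to compare your argument against. Judged on its own, your proposal is the standard proof of Mercer's theorem and is essentially sound: compactness and self-adjointness of $T_K$, positivity of the operator via Riemann sums, continuity of the eigenfunctions from $\phi_i=\lambda_i^{-1}T_K\phi_i$, non-negativity of the remainder kernel on the diagonal, Dini's theorem, and the Cauchy--Schwarz upgrade off the diagonal are exactly the right ingredients.

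Two steps are stated more quickly than they can be justified. First, the passage from $\langle T_{R_n}f,f\rangle\ge 0$ to $R_n(s,s)\ge 0$ is not formal: an operator-positive kernel could a priori be negative at a point, and one must use the continuity of $R_n$ together with a bump function concentrated near $(s_0,s_0)$ to derive a contradiction from $R_n(s_0,s_0)<0$. Second, and more seriously, the identification $\sum_i\lambda_i\phi_i(s)^2=K(s,s)$ cannot be obtained by ``comparing the $L^2(dt)$-expansion of $K(s,\cdot)$ with its continuous representative'': pointwise convergence of continuous partial sums to some limit, together with $L^2$ convergence to $K(s,\cdot)$, does not by itself identify the pointwise limit with $K(s,t)$ at the single point $t=s$. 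The clean route is to invoke Cauchy--Schwarz \emph{before} the diagonal identification: for fixed $s$, the Bessel bound $\sum_{i=n+1}^m\lambda_i\phi_i(t)^2\le\max_{t}K(t,t)$ and the convergence of $\sum_i\lambda_i\phi_i(s)^2$ (both of which you already have at that stage) show that $\sum_i\lambda_i\phi_i(s)\phi_i(t)$ converges uniformly in $t$; its sum is therefore continuous and, being also the $L^2(dt)$-limit, coincides with the continuous function $K(s,\cdot)$ everywhere, in particular at $t=s$. With the order of these two steps exchanged, your argument is complete and is the proof one finds in the cited source.
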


\begin{Theorem}[Karhunen-Loeve, \cite{AA}] Let $X_t$ be a centered  (i. e. $E(X_t)=0$),  mean square continuous ($\lim_{t\rightarrow t_0}E((X_t-X_{t_0})^2)=0$) stochastic process on a probability space $(\Omega, \mathcal{F}, \mu)$ with $X_t\in L^2(\Omega, [0,T])$. Then there is an orthonormal basis $\{\phi_i\}$ of $L^2([0,T])$ such that for all $t\in [0,T]$, $$X_t=\sum_{i=1}^\infty X_i\phi_i(t),$$ where $X_i=\int_0^TX_t\phi_i(t)dt$, the convergence is in $L^2(\Omega)$, and $E(X_i)=0$, $E(X_iX_j)=\delta_{i,j}\lambda_i, i, j=1, 2, \cdots.$
\end{Theorem}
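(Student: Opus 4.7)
The plan is to carry out the standard Karhunen–Loeve argument: build the covariance kernel of $X_t$, apply Mercer's theorem (Theorem 4.2) to diagonalize it in $L^2([0,T])$, then read off the expansion by taking the $X_i$ to be the Fourier coefficients of $X_t$ against the resulting orthonormal basis.

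First I would define $K:[0,T]\times[0,T]\to\RRR$ by $K(s,t)=E(X_sX_t)$. Symmetry is obvious, and non-negative definiteness follows from $\sum_{i,j}c_ic_jK(t_i,t_j)=E\bigl((\sum_i c_iX_{t_i})^2\bigr)\ge 0$. Continuity of $K$ is obtained from mean-square continuity of $X_t$ via Cauchy–Schwarz:
\begin{equation*}
|K(s,t)-K(s_0,t_0)|\le \|X_s\|_2\,\|X_t-X_{t_0}\|_2+\|X_{t_0}\|_2\,\|X_s-X_{s_0}\|_2,
\end{equation*}
together with the fact that mean-square continuity on $[0,T]$ forces $t\mapsto \|X_t\|_2$ to be continuous and hence bounded. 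So $K$ is a continuous kernel and Theorem 4.2 supplies an orthonormal basis $\{\phi_i\}$ of $L^2([0,T])$, non-negative eigenvalues $\{\lambda_i\}$ with $T_K\phi_i=\lambda_i\phi_i$, and the absolutely and uniformly convergent Mercer expansion $K(s,t)=\sum_i\lambda_i\phi_i(s)\phi_i(t)$.

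Next I would set $X_i:=\int_0^T X_t\,\phi_i(t)\,dt$, interpreting the integral as a Bochner/$L^2(\Omega)$ integral (it converges because $t\mapsto X_t$ is $L^2(\Omega)$-continuous on a compact interval and hence uniformly bounded in $\|\cdot\|_2$). Using Fubini on the product measure $\mu\otimes dt$ we get $E(X_i)=\int_0^T E(X_t)\phi_i(t)\,dt=0$, and
\begin{equation*}
E(X_iX_j)=\int_0^T\!\!\int_0^T K(s,t)\phi_i(s)\phi_j(t)\,ds\,dt=\int_0^T (T_K\phi_i)(t)\phi_j(t)\,dt=\lambda_i\,\delta_{ij}.
\end{equation*}
The same computation also yields the key cross term $E(X_tX_i)=(T_K\phi_i)(t)=\lambda_i\phi_i(t)$, which is exactly what is needed for the next step.

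Finally I would prove $L^2(\Omega)$-convergence of the partial sums $S_n(t):=\sum_{i=1}^n X_i\phi_i(t)$ to $X_t$ by expanding the square:
\begin{equation*}
E\bigl((X_t-S_n(t))^2\bigr)=K(t,t)-2\sum_{i=1}^n\phi_i(t)\,E(X_tX_i)+\sum_{i,j=1}^n\phi_i(t)\phi_j(t)E(X_iX_j)=K(t,t)-\sum_{i=1}^n\lambda_i\phi_i(t)^2,
\end{equation*}
which tends to $0$ by Mercer's theorem applied at the diagonal point $(t,t)$. The main technical obstacle is the continuity of $K$: one has to be careful to deduce it from $L^2$-continuity of $X_t$ plus the resulting uniform bound on $\|X_t\|_2$, since without continuity of $K$ Mercer's theorem does not apply and the whole spectral step collapses. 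Once that is in hand, the rest is a bookkeeping exercise in Fubini and the eigenfunction identity.
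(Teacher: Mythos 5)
Your proof is correct and is the standard Karhunen--Loeve argument; the paper itself states this theorem without proof (it is quoted from \cite{AA}), but your route --- continuity and positivity of the covariance kernel, Mercer's theorem, Fourier coefficients $X_i$, and the expansion of $E\bigl((X_t-S_n(t))^2\bigr)=K(t,t)-\sum_{i=1}^n\lambda_i\phi_i(t)^2\to 0$ --- is exactly the argument the paper reproduces for its free analogue in Theorem 4.5. Nothing is missing; your extra care about deducing continuity of $K$ from mean-square continuity is a point the paper handles separately in Lemma 4.4.
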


\subsection{The non-commutative stochastic process case}
Let $\{X(t):t\ge 0\}$ be a free Poisson process in a
$W^*$-probability space $(\A,\varphi)$ with $\alpha\in \mathbb{R},
k=1$ (See Definition 3.1). Suppose that the process is continuous
with respect to   $\|\cdot\|_2$ in $\A$ (which is called
$L^2$-continuous). For $T>0$, let $L^2(0,T;\A)$ be the space of all
$L^2$-continuous non-commutative stochastic processes of
self-adjoint operators in $(\A,\varphi)$. It is obvious that
$(s,t)\mapsto \varphi (X_sX_t)$ is continuous on $[0,T]\times
[0,T]$. Thus, we can define an inner product $\langle
X,Y\rangle=\int_0^T\varphi(X(t)Y(t))dt$ in $L^2(0,T;\A)$. We get a
Hilbert space $L^2(0,T;\A)$.

\begin{Lemma} Let $X_t$ be a free Poisson Process in a $W^*$-probability space $(\A,\varphi)$ with distribution $\kappa_n(X_t)=t\alpha^n, n=1, 2, \cdots, t\ge 0, \alpha\in \mathbb{R}$. Suppose $X_t\in L^2(0,T;\A)$. Then the second free cumulant $k(s,t):=\kappa_2(X_s, X_t)$ is a continuous kernel on $[0,T]$.
\end{Lemma}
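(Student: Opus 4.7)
The plan is to compute $k(s,t)$ explicitly and then verify the three required properties (continuity, symmetry, non-negative definiteness) directly. The key observation is that a free Poisson process, by Definition 3.1, has freely independent increments and $X_0=0$, so for $0\le s\le t\le T$ the variables $X_s=X_s-X_0$ and $X_t-X_s$ are freely independent. By bilinearity of the second cumulant and the vanishing of mixed free cumulants on free variables (Theorem 1.1),
\[
\kappa_2(X_s,X_t)=\kappa_2(X_s,X_s)+\kappa_2(X_s,X_t-X_s)=\kappa_2(X_s,X_s)+0=s\alpha^2.
\]
By symmetry the same reasoning for $t\le s$ yields $k(s,t)=\alpha^2 t$, so in general $k(s,t)=\alpha^2\min(s,t)$.

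Once this formula is established, continuity on $[0,T]\times[0,T]$ is immediate because $\min(s,t)$ is continuous, and symmetry is immediate from $\min(s,t)=\min(t,s)$. The only content left is non-negative definiteness. I would argue it directly from the fact that the second free cumulant of a self-adjoint element equals its variance: for any $t_1,\dots,t_n\in[0,T]$ and $c_1,\dots,c_n\in\mathbb{R}$, set $Y=\sum_{i=1}^n c_i X_{t_i}$, which is self-adjoint. Then by multilinearity,
\[
\sum_{i,j=1}^n c_ic_j\,k(t_i,t_j)=\sum_{i,j=1}^n c_ic_j\,\kappa_2(X_{t_i},X_{t_j})=\kappa_2(Y,Y)=\varphi(Y^2)-\varphi(Y)^2\ge 0,
\]
the last inequality being the Cauchy--Schwarz inequality applied to the faithful tracial state $\varphi$. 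Alternatively, one can observe the integral representation $\min(s,t)=\int_0^T\mathbf{1}_{[0,s]}(u)\mathbf{1}_{[0,t]}(u)\,du$ and rewrite the quadratic form as $\alpha^2\bigl\|\sum c_i\mathbf{1}_{[0,t_i]}\bigr\|_{L^2}^2$.

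I do not anticipate any serious obstacle here: the only step that uses the hypotheses in a nontrivial way is the computation of $k(s,t)$, and that is a one-line consequence of free independence of increments combined with the free Poisson moment formula $\kappa_2(X_s)=s\alpha^2$. The $L^2$-continuity assumption is used implicitly only to ensure $X_s,X_t\in L^2(\A,\varphi)$ so that $\kappa_2(X_s,X_t)$ is well-defined and finite; it is not needed for continuity of the resulting kernel, which is automatic from the closed-form expression $\alpha^2\min(s,t)$.
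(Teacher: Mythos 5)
Your proposal is correct, and its core is the same as the paper's: both proofs use the decomposition $X_t=X_s+(X_t-X_s)$, free independence of increments, and the vanishing of mixed free cumulants to get $\kappa_2(X_s,X_t)=\kappa_2(X_s,X_s)=s\alpha^2$ for $s\le t$, and both establish non-negative definiteness by identifying the quadratic form $\sum_{i,j}c_ic_j\,k(t_i,t_j)$ with $\varphi(\widetilde{Y}^2)\ge 0$ for the centered element $\widetilde{Y}=\sum_i c_i\widetilde{X}_{t_i}$. The one place you genuinely diverge is the continuity step: the paper does not write down the closed form $k(s,t)=\alpha^2\min(s,t)$ explicitly, but instead uses the representation $k(s,t)=\varphi(\widetilde{X}_s\widetilde{X}_t)$ together with the assumed $L^2$-continuity of $t\mapsto X_t$ to conclude that $k$ is continuous. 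Your route via the explicit formula is cleaner and makes a valid point the paper misses, namely that the $L^2$-continuity hypothesis is not actually needed for continuity of the kernel (indeed, since $\varphi((X_t-X_s)^2)=(t-s)\alpha^2+(t-s)^2\alpha^2$, the process is automatically $L^2$-continuous for this distribution); the paper's route, on the other hand, is the one that generalizes to processes where no closed form for the covariance is available. Both your main argument and your alternative via $\min(s,t)=\int_0^T\mathbf{1}_{[0,s]}(u)\mathbf{1}_{[0,t]}(u)\,du$ are sound.
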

\begin{proof} Suppose $0<s\le t\le T$. Then $$k(s,t)=\kappa_2(X_s, X_t)=\kappa_2(X_s, X_t-X_s+X_s)=\kappa_2(X_s,X_s)=s\alpha^2=k(t,s).$$ Moreover, by Example 11.6 in \cite{NS}, $$k(s,t)=\kappa_2(X_s,X_t)=\varphi (X_sX_t)-\varphi (X_s)\varphi (X_t)=\varphi ((X_s-\varphi (X_s))(X_t-\varphi(X_t)))=\varphi (\widetilde{X}_s\widetilde{X}_t),$$ where $\widetilde{X}_t=X_t-\varphi (X_t)$.

It implies that for $0<t_1, t_2, \cdots, t_n\le T$ and $c_1, c_2, \cdots, c_n\in \mathbb{R}$, we have
$$0\le \varphi ((\sum_{i=1}^nc_i\widetilde{X}_{t_i})(\sum_{i=1}^nc_i\widetilde{X}_{t_i}))=\sum_{i,j=1}^nc_ic_j\varphi(\widetilde{X}_{t_i}\widetilde{X}_{t_j})=\sum_{i,j=1}^nc_ic_jk(t_i,t_j).$$ That is, $k$ is non-negative definite.
Note that the continuity of $t\mapsto X_t$ implies the continuity of $t\mapsto \widetilde{X}_t$. Thus, we can assume that $X_t$ is centered, therefore, $k(s,t)=\varphi (X_sX_t)$. It implies that $k(s,t):[0,T]\times [0,T]\rightarrow \mathbb{R}$ is continuous.
\end{proof}
We are in the position to present the  Karhunen-Loeve expansion of a free Poisson Process.
\begin{Theorem}Let$(X_t)_{t\ge 0}$ be an $L^2$-continuous  free Poisson process in a $W^*$-probability space $(\A,\varphi)$ with distribution $\kappa_n(X_t)=t\alpha^n, t\ge 0, n=1, 2, \cdots$, for some $\alpha\in \mathbb{R}$. Let  $$\widetilde{X}_t=X_t-t\alpha I( t\ge 0),  \lambda_n=\frac{\alpha^2T^2}{(n-1/2)^2\pi^2},   \phi_n(t)=(\frac{2}{T})^{1/2}\sin(\frac{(n-1/2)\pi t}{T}), n\ge 1.$$ Then we have the Karhunen-Loeve expansion of $\widetilde{X}_t$.
\begin{enumerate}
\item
$$\widetilde{X}_t=\sum_{i=1}^\infty \widetilde{X}_i\phi_i(t), 0<t\le T,$$ where $\widetilde{X}_i=\int_0^T\widetilde{X}_t\phi_i(t)dt$, the convergence is in $L^2(\A,\varphi)$.
\item $\varphi (\widetilde{X}_i\widetilde{X}_j)=\delta_{i,j}\lambda_i, \forall i, j$.
\end{enumerate}
\end{Theorem}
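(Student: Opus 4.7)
My plan is to reduce the statement to the classical Karhunen-Loeve setup by using Lemma~4.4, then transport Mercer's theorem across the trace to get the $L^2(\A,\varphi)$-convergence.

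First I would compute the covariance kernel. By Lemma~4.4, for $0<s\le t\le T$,
$$k(s,t)=\kappa_2(X_s,X_t)=\varphi(\widetilde X_s\widetilde X_t)=s\alpha^2=\alpha^2\min(s,t),$$
and symmetry in $s,t$ gives $k(s,t)=\alpha^2\min(s,t)$ on $[0,T]\times[0,T]$. Lemma~4.4 already established that $k$ is a continuous, symmetric, non-negative definite kernel, so Mercer's theorem (Theorem~4.3) applies to the integral operator $T_k$ on $L^2([0,T])$.

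Next I would solve the eigenvalue problem $T_k\phi=\lambda\phi$, i.e.
$$\alpha^2\!\int_0^T\!\min(s,t)\phi(t)\,dt=\lambda\phi(s).$$
Splitting the integral at $s$ gives $\alpha^2\bigl(\int_0^t\! s\phi(t)\,dt+\int_s^T\! t\phi(t)\,dt\bigr)=\lambda\phi(s)$; differentiating twice converts this to the ODE $\alpha^2\phi(s)=-\lambda\phi''(s)$ with boundary conditions $\phi(0)=0$ (from evaluation at $s=0$) and $\phi'(T)=0$ (from differentiating the equation at $s=T$). The sine solutions that meet both conditions are $\phi_n(t)=\sqrt{2/T}\sin((n-1/2)\pi t/T)$, normalized in $L^2([0,T])$, with eigenvalues $\lambda_n=\alpha^2T^2/((n-1/2)^2\pi^2)$. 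By Mercer,
$$k(s,t)=\sum_{n=1}^\infty\lambda_n\phi_n(s)\phi_n(t)$$
with absolute and uniform convergence on $[0,T]^2$, and $\{\phi_n\}$ is an orthonormal basis of $L^2([0,T])$.

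Now I would define the coefficients $\widetilde X_i=\int_0^T\widetilde X_t\,\phi_i(t)\,dt$ interpreted as a Bochner/weak integral into $L^2(\A,\varphi)$; $L^2$-continuity of $t\mapsto\widetilde X_t$ makes this integral exist. For (2), Fubini combined with $\varphi(\widetilde X_s\widetilde X_t)=k(s,t)$ and the Mercer expansion yields
$$\varphi(\widetilde X_i\widetilde X_j)=\int_0^T\!\!\int_0^T\phi_i(s)\phi_j(t)k(s,t)\,ds\,dt=\sum_n\lambda_n\langle\phi_i,\phi_n\rangle\langle\phi_j,\phi_n\rangle=\delta_{ij}\lambda_i.$$
For (1), set $S_N(t)=\sum_{i=1}^N\widetilde X_i\phi_i(t)$ and compute, using the same Fubini step,
$$\varphi\bigl((\widetilde X_t-S_N(t))^2\bigr)=k(t,t)-2\sum_{i=1}^N\lambda_i\phi_i(t)^2+\sum_{i=1}^N\lambda_i\phi_i(t)^2=k(t,t)-\sum_{i=1}^N\lambda_i\phi_i(t)^2,$$
which tends to $0$ by Mercer's theorem (even uniformly in $t$).

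The only genuine obstacle is the eigenvalue/eigenfunction computation, which is the well-known Brownian-motion covariance problem; everything else is formal once the kernel is identified and Fubini is used to move $\varphi$ past the $dt$-integrals against continuous functions. One minor point worth flagging: when $\alpha=0$ the process $X_t=0$ and the statement is trivially (and vacuously) true, so we may assume $\alpha\ne 0$ in the eigenvalue step without loss of generality.
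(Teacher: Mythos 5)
Your proposal is correct and follows essentially the same route as the paper's proof: identify the covariance kernel via Lemma~4.4, apply Mercer's theorem (Theorem~4.2), use Fubini to move $\varphi$ past the $dt$-integrals for part~(2), and compute $\varphi\bigl((\widetilde X_t-\sum_{i=1}^N\widetilde X_i\phi_i(t))^2\bigr)=k(t,t)-\sum_{i=1}^N\lambda_i\phi_i(t)^2\to 0$ for part~(1). The only difference is that you solve the eigenvalue problem for $k(s,t)=\alpha^2\min(s,t)$ explicitly (correctly, modulo a typo in the split integral), whereas the paper simply cites the analogous Brownian-motion computation from a reference; your version is the more self-contained of the two.
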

\begin{proof}
By Lemma 4.4 and Theorem 4.2, there is an orthonormal basis
$\{\phi_i(t):i=1, 2, \cdots\}$ of $L^2([0,T])$ and non-negative
numbers $\lambda_i, i=1, 2, \cdots$. such that $$\varphi
(\widetilde{X}_s\widetilde{X}_t)=\varphi(X_sX_t)-\varphi
(X_s)\varphi (X_t)=k(s,t)=\sum_{i=1}^\infty
\lambda_i\phi_i(s)\phi_i(t), \forall t, s\in [0,T],$$ where the
convergence is absolute and uniform on $[0,T]\times [0,T]$. Let's
prove the properties of $X_i$. $\varphi
(\widetilde{X}_i)=\int_0^T\varphi (\widetilde{X}_t)\phi_i(t)dt=0$,
and
$$\varphi (\widetilde{X}_i\widetilde{X}_j)=\int_0^T\int_0^T\varphi(\widetilde{X}_t\widetilde{X}_s)\phi_i(s)\phi_j(t)dsdt=\int_0^T\int_0^Tk(s,t)\phi_i(s)\phi_j(t)dsdt=\delta_{i,j}\lambda_i.$$
For $t\in [0,T]$ and $n\in \mathbb{N}$, we have
\begin{align*}
\varphi ((\widetilde{X}_t-\sum_{i=1}^n\widetilde{X}_i\phi_i(t))^2)&=\varphi (\widetilde{X}_t^2)-2\sum_{i=1}^n\varphi (\widetilde{X}_t\widetilde{X}_i)\phi_i(t)+\sum_{i,j=1}^n\varphi (\widetilde{X}_i\widetilde{X}_j)\phi_i(t)\phi_j(t)\\
&=k(t,t)-2\sum_{i=1}^n(\int_0^T\varphi (\widetilde{X}_t\widetilde{X}_s)\phi_i(s)ds\phi_i(t))+\sum_{i,j=1}^n\varphi (\widetilde{X}_i\widetilde{X}_j)\phi_i(t)\phi_j(t)\\
&=k(t,t)-2\sum_{i=1}^n\lambda_i\phi_i(t)^2+\sum_{i=1}^n\lambda_i\phi_i(t)^2\rightarrow 0,
\end{align*}
as $n\rightarrow \infty$, by Theorem 4.2. Moreover, the convergence is uniform on $[0,T]$. By the proof of Lemma 5.4 and the example in Page 27 of \cite{DJ}, we can choose the eigenvalues and eigenfunctions as follows
 $$\lambda_n=\frac{\alpha^2T^2}{(n-1/2)^2\pi^2}, \phi_n(t)=(\frac{2}{T})^{1/2}\sin(\frac{(n-1/2)\pi t}{T}).$$
\end{proof}
\section{Integration with respect to free Poisson Random Measures}
A generalization of free Poisson processes is the following {\sl free Poisson random measures}.
\begin{Definition} Let $\alpha\in \mathbb{R}$. A free Poisson random measure is a map $X$ from the set $\mathcal{B}_0$ of all Borel subsets with finite Lebesgue measure on the real line $\mathbb{R}$ into $\mathcal{A}_{sa}$, the space of all self-adjoint elements of a $*$ non-commutative probability space $(\mathcal{A}, \varphi)$, with the following properties.
\begin{enumerate}
\item $X_E$ has a free Poisson distribution with parameters $\alpha$ and $|E|$, for a set $E\in \mathcal{B}_0$, where $|E|$ is the Lebesgue measure of $E$, that is, the $n$-th free cumulant $\kappa_n(X_E)=|E|\alpha^n$, $n\ge 1$.
\item If $E_1, E_2, \cdots, E_k$ are mutually disjoint, then $X_{E_1}, X_{E_2}, \cdots, X_{E_k}$ are freely independent.
\item If $E_1, E_2, \cdots, E_k$ are mutually disjoint, then $X_{\cup_{i=1}^nE_i}=\sum_{i=1}^nX_{E_i}$.
\end{enumerate}
\end{Definition}
\begin{Remark}
 We restrict the map $X$ to $\{[0,t):0< t\}$, and let $X_0=0$,   then $\{0\}\cup \{X_t=X_{[0,t)}:t> 0\}$ is a free Poisson process. Thus a free Poisson measure can be regarded as a generalization of a free Poisson process.
\end{Remark}

For notational simplicity, we assume from now on that $\alpha=1$ in the definition of free Poisson measures. To study integration with respect to a free Poisson random measure, we need to deal with convergence problems. Thus, from now on, we assume that $(\mathcal{A},\varphi)$ is a $W^*$-probability space. Define $\|A\|_p=(\varphi (|A|^p))^{\frac{1}{p}}$, for $A\in \A$. Let $L^p(\mathcal{A},\varphi)$ be the completion of $\mathcal{A}$ with respect to the norm $\|\cdot\|_p$ for $p\ge 1$.

 Let $s=\sum_{i=1}^kc_i\chi_{E_i}$ be a simple function on $\mathbb{R}$, where $E_1, E_2, \cdots, E_n$ are mutually disjoint Borel sets on $\mathbb{R}$ with finite measures, and $c_1, c_2, \cdots, c_n$ are mutually distinct numbers. Define the integral of $s$ with respect to $X_E$ as 
$$X(s)=\int_\mathbb{R}s(x)X(dx):=\sum_{i=1}^nc_iX_{E_i}.$$
Then $\varphi (X(s))=\sum_{i=1}^nc_i\varphi (X_{E_i})=\sum_{i=1}^nc_i|E_i|=\int_{\mathbb{R}}s(x)dx$, and
\begin{align*}
\varphi(X^*(s)X(s))&=\sum_{i,j=1}^nc_i\overline{c_j}\varphi(X_{E_i}X_{E_j})\\
=&\sum_{i\ne j}c_i\overline{c_j}\varphi(X_{E_i}X_{E_j})+\sum_{i=1}^n|c_i|^2\varphi(X_{E_i}^2)\\
=&\sum_{i\ne j}c_i\overline{c_j}\varphi(X_{E_i})\varphi (X_{E_j})+\sum_{i=1}^n|c_i|^2(\kappa_2(X_{E_i})+\varphi(X_{E_i})^2)\\
=&\int_\mathbb{R}\overline{s(x)}dx\int_\mathbb{R}s(x)dx+\int_{\mathbb{R}}|s(x)|^2dx\\
\le & \|s\|_1^2+\|s\|_2^2\le (|supp(s)|^2+1)\|s\|_2^2,
\end{align*}
where $|supp(s)|$ is the Lebesgue measure of the support set of function $s$. That is,
$$\varphi(X^*(s)X(s))\le \|s\|_1^2+\|s\|_2^2\le (|supp(s)|^2+1)\|s\|_2^2.\eqno (1)$$
Hence, for a function $f\in L^2(\mathbb{R})$ with finite measure
support $S_f$, we can define the integral of $f$ with respect to a
free Poisson random measure $X$ as follows. Let $\{s_n\}$ be a
sequence of simple functions such that $\|s_n-f\|_2\rightarrow 0$,
as $n\rightarrow \infty$, and the support of $s_n$, for all $n$, are
contained in $S_f$, the support of $f$. Then, we have
$\|X(s_n)-X(s_m)\|_2^2\le (|S_f|^2+1)\|s_m-s_n\|_2^2\rightarrow 0$,
as both $m$ and $n$ approach $\infty$. It implies that we can define
the integral as $$X(f):=\int_\mathbb{R}f(x)X(dx)=\lim_{n\rightarrow
\infty}X(s_n)\in L^2(\mathcal{A},\varphi),$$ where the limit is
taken in the norm $\|\cdot\|_2$ in $\mathcal{A}$. It is obvious that
the limit $X(f)$ is independent of the choice of $s_n$. Moreover,
for $f\in L^2(\mathbb{R})$ with $S_f:=supp(f), |S_f|<\infty$, let
$s_n\rightarrow f$  in $\|\cdot\|_2$, and $supp(s_n)\subseteq E_f$.
By Cauchy-Schwartz inequality, $s_n\rightarrow f$ with respect to
$\|\cdot\|_1$. It follows from $(1)$ that $$\varphi
(X(f)^*X(f))=\lim_{n\rightarrow \infty}\varphi
(X(s_n)^*X(s_n))\le\lim_{n\rightarrow
\infty}(\|s_n\|_1^2+\|s_n\|_2^2)=\|f\|_1^2+\|f\|_2^2. \eqno (2)$$

For a function $f\in L^2(\mathbb{R})$, we can define  $$X(f,t):=\int_{-t}^tf(x)X(dx), \forall t\ge 0, X(f)=\lim_{t\rightarrow \infty}X(f,t),\eqno (3)$$ provided that the limit taken in the norm $\|\cdot\|_2$ in $L^2(\mathcal{A},\varphi)$ exists. By the above definition of integration, we have the following rudimental property.
\begin{Proposition} Let $f,g\in L^2(\mathbb{R})$ such that $X(f)$ and $X(g)$ exist. Then, for real numbers $\alpha, \beta$,
$$X(\alpha f+\beta g)=\alpha X(f)+\beta X(g).$$
\end{Proposition}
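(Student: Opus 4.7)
The plan is to prove the statement by a standard three-step approximation argument: first for simple functions, then for $L^2$ functions with finite-measure support, then for general $L^2$ functions via the truncation definition in $(3)$.

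First, I would verify linearity on simple functions. If $s = \sum_{i=1}^m c_i \chi_{E_i}$ and $s' = \sum_{j=1}^n d_j \chi_{F_j}$ with $\{E_i\}$, $\{F_j\}$ each disjoint and of finite measure, I would pass to the common refinement $\{E_i \cap F_j, E_i \setminus \bigcup_j F_j, F_j \setminus \bigcup_i E_i\}$, which is still a disjoint collection of sets in $\mathcal{B}_0$. Using the additivity property (3) of the free Poisson random measure, each $X_{E_i}$ and $X_{F_j}$ decomposes as a sum over this refinement. The identity $X(\alpha s + \beta s') = \alpha X(s) + \beta X(s')$ then reduces to a direct finite computation term-by-term. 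This yields linearity of $X$ on the space of simple functions supported in sets of finite measure.

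Second, I would upgrade to functions $f, g \in L^2(\mathbb{R})$ with $|S_f|, |S_g| < \infty$. Choose simple functions $\{s_n\}, \{s_n'\}$ with $\mathrm{supp}(s_n) \subseteq S_f$ and $\mathrm{supp}(s_n') \subseteq S_g$ such that $\|s_n - f\|_2 \to 0$ and $\|s_n' - g\|_2 \to 0$. Then $\alpha s_n + \beta s_n' \to \alpha f + \beta g$ in $L^2(\mathbb{R})$, with supports contained in $S_f \cup S_g$, a set of finite Lebesgue measure. Applying the key estimate $(1)$ to $X(s_n) - X(f)$, $X(s_n') - X(g)$, and $X(\alpha s_n + \beta s_n') - X(\alpha f + \beta g)$, all three quantities vanish in $\|\cdot\|_2$. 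Passing to the $L^2(\mathcal{A},\varphi)$-limit in the identity $X(\alpha s_n + \beta s_n') = \alpha X(s_n) + \beta X(s_n')$ from the previous step gives linearity in this case.

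Third, for general $f, g \in L^2(\mathbb{R})$ with $X(f), X(g)$ existing (in the sense of $(3)$), I would note that $\chi_{[-t,t]} f$ and $\chi_{[-t,t]} g$ have finite-measure support, hence the previous step gives
\begin{equation*}
X(\alpha f + \beta g, t) = \alpha X(f, t) + \beta X(g, t) \quad \text{for every } t > 0.
\end{equation*}
Letting $t \to \infty$ in $\|\cdot\|_2$ and using the hypothesis that the limits defining $X(f)$ and $X(g)$ exist, the right-hand side converges to $\alpha X(f) + \beta X(g)$; hence the left-hand side converges as well, and the limit must equal $X(\alpha f + \beta g)$ by uniqueness of limits in the complete space $L^2(\mathcal{A},\varphi)$.

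The main obstacle I anticipate is purely bookkeeping in the first step — ensuring that passing to a common refinement of two disjoint families of Borel sets remains compatible with the additivity property (3), so that the definition $X(s) = \sum c_i X_{E_i}$ is independent of the chosen representation of $s$ as a simple function. Once that invariance is established, the remaining steps are straightforward continuity arguments based on the estimate $(1)$ and the truncation definition $(3)$.
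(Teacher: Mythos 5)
Your proof is correct. The paper offers no proof at all for this proposition --- it is stated as a ``rudimental property'' following immediately from the definition of the integral --- and your three-step argument (linearity on simple functions via a common refinement and the additivity property of the random measure, then passage to finite-support $L^2$ functions via estimate $(1)$, then to general $L^2$ functions via the truncation definition $(3)$) is exactly the natural way to fill in the details the authors left implicit, including the well-definedness of $X(s)$ under change of representation, which the paper also glosses over.
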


\begin{Theorem} Let $f\in L^1(\mathbb{R})\cap L^2(\mathbb{R})$ be a real-valued function. Then $ X(f)=\lim_{t\rightarrow \infty}X(f,t)$ exists.
\end{Theorem}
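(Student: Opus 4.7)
The plan is to verify the Cauchy criterion for the net $\{X(f,t)\}_{t \ge 0}$ in $L^2(\mathcal{A}, \varphi)$, using the key estimate (2) from the definition of the integral.

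First I observe that each $X(f,t)$ is well defined: the function $f \chi_{[-t,t]}$ lies in $L^2(\mathbb{R})$ (since $f \in L^2(\mathbb{R})$) and has support of finite Lebesgue measure (namely, contained in $[-t,t]$), so $X(f,t) = X(f \chi_{[-t,t]})$ falls under the previously constructed integral. For $0 \le s < t$, linearity (Proposition 5.3) gives
\[
X(f,t) - X(f,s) = X\bigl(f \chi_{\{s < |x| \le t\}}\bigr),
\]
and again the integrand has finite-measure support, so estimate (2) applies.

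Applying (2) to $g = f \chi_{\{s < |x| \le t\}}$ yields
\[
\|X(f,t) - X(f,s)\|_2^2 \le \|f \chi_{\{s < |x| \le t\}}\|_1^2 + \|f \chi_{\{s < |x| \le t\}}\|_2^2 \le \|f \chi_{\{|x| > s\}}\|_1^2 + \|f \chi_{\{|x| > s\}}\|_2^2.
\]
Since $f \in L^1(\mathbb{R}) \cap L^2(\mathbb{R})$, the dominated convergence theorem (with dominating functions $|f|$ and $|f|^2$) gives $\|f \chi_{\{|x| > s\}}\|_1 \to 0$ and $\|f \chi_{\{|x| > s\}}\|_2 \to 0$ as $s \to \infty$. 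Thus for every sequence $t_n \to \infty$, $\{X(f, t_n)\}$ is Cauchy in the complete space $L^2(\mathcal{A}, \varphi)$ and therefore converges. A standard interleaving argument shows the limit is independent of the sequence, so $X(f) = \lim_{t \to \infty} X(f,t)$ exists in $\|\cdot\|_2$.

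I do not expect any real obstacle here: the only subtle point is making sure the auxiliary estimate (2) applies, which requires finite measure support of the integrand — this is built into the truncation $\chi_{\{s < |x| \le t\}}$. The role of the hypothesis $f \in L^1 \cap L^2$ is exactly to make both tail-norms in the bound shrink to zero, which is why both integrability conditions are needed.
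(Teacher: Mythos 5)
Your proof is correct and follows essentially the same route as the paper: truncate $f$ to $[-t,t]$, apply estimate (2) to the difference of truncations, and invoke completeness of $L^2(\mathcal{A},\varphi)$ via the Cauchy criterion. You are in fact slightly more careful than the paper, which only verifies Cauchyness along the integer sequence $f_n = f\chi_{[-n,n]}$, whereas you address the full net $t\to\infty$ and the independence of the limit from the chosen sequence.
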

\begin{proof} For $f\in L^1(\mathbb{R})\cap L^2(\mathbb{R})$, let $I_n=[-n,n], n=1, 2, \cdots$. Then $I_n\nearrow\mathbb{R}$, and  $$\int_{\mathbb{R}}|f(x)|dx=\lim_{n\rightarrow \infty}\int_{I_n}|f(x)|dx, \int_{\mathbb{R}}|f(x)|^2dx=\lim_{n\rightarrow \infty}\int_{I_n}|f(x)|^2dx.$$ Thus, let $f_n(x)=f(x)\chi_{[-n,n]}(x), n=1, 2, \cdots$. Then $\|f-f_n\|_1+\|f-f_n\|_2\rightarrow 0$, as $n\rightarrow \infty$. By identity $(2)$, we have
$$\|X(f_n)-X(f_m)\|_2^2=\|X(f_n-f_m)\|_2^2\le \|f_n-f_m\|_1^2+\|f_n-f_m\|_2^2\rightarrow 0,$$ as $n,m\rightarrow \infty$. Therefore, we have
$$X(f)=\int_{\mathbb{R}}f(x)X(dx)=\lim_{n\rightarrow \infty}\int_{-n}^nf(x)X(dx)$$ exists.
\end{proof}

\begin{Theorem}
Let $f_n, n\ge 1, f$ be real-valued functions in $L^1(\mathbb{R})\cap L^2(\mathbb{R})$ such that $\lim_{n\rightarrow \infty}(\|f_n-f\|_1+\|f_n-f\|_2)=0$.Then $\lim_{n\rightarrow \infty}\|X(f_n)-X(f)\|_2=0$, that is,
$$\lim_{n\rightarrow \infty}\int_\mathbb{R}f_n(t)dX_E(t)=\int_\mathbb{R}\lim_{n\rightarrow\infty} f_n(t)dX_E(t).$$
\end{Theorem}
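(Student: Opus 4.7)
The plan is to reduce the statement to the a priori estimate already built into the construction of $X(f)$, namely inequality $(2)$: for any real-valued $g\in L^1(\mathbb{R})\cap L^2(\mathbb{R})$,
$$\|X(g)\|_2^2=\varphi(X(g)^*X(g))\le \|g\|_1^2+\|g\|_2^2.$$
In the excerpt, $(2)$ is stated explicitly only for $g\in L^2(\mathbb{R})$ with $|\mathrm{supp}(g)|<\infty$, so the first step will be to upgrade it to all of $L^1(\mathbb{R})\cap L^2(\mathbb{R})$.

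First, I would verify that $(2)$ extends to $L^1(\mathbb{R})\cap L^2(\mathbb{R})$. For $g\in L^1(\mathbb{R})\cap L^2(\mathbb{R})$, let $g_k=g\chi_{[-k,k]}$; each $g_k$ has finite-measure support, so $\varphi(X(g_k)^*X(g_k))\le \|g_k\|_1^2+\|g_k\|_2^2$. As in the proof of Theorem 5.4, $X(g_k)\to X(g)$ in $\|\cdot\|_2$, while $\|g_k\|_1\to \|g\|_1$ and $\|g_k\|_2\to \|g\|_2$ by dominated convergence. Passing to the limit yields $\|X(g)\|_2^2\le \|g\|_1^2+\|g\|_2^2$.

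Second, I would invoke linearity from Proposition 5.3: since both $X(f_n)$ and $X(f)$ exist by Theorem 5.4 and $f_n-f\in L^1(\mathbb{R})\cap L^2(\mathbb{R})$,
$$X(f_n)-X(f)=X(f_n-f).$$
Applying the extended inequality $(2)$ to $g=f_n-f$ gives
$$\|X(f_n)-X(f)\|_2^2\le \|f_n-f\|_1^2+\|f_n-f\|_2^2,$$
and the hypothesis $\|f_n-f\|_1+\|f_n-f\|_2\to 0$ forces the right-hand side to $0$. This is exactly the stated limit-integration exchange.

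There is no real obstacle here; the only subtlety is the domain-extension of $(2)$ in the first step, which is routine once one recalls that $X(f)$ was defined by an $L^2$-limit of integrals of compactly supported truncations and that both $\|\cdot\|_1$ and $\|\cdot\|_2$ are continuous under such truncation for functions in $L^1(\mathbb{R})\cap L^2(\mathbb{R})$. Everything else is a one-line consequence of linearity and the inequality.
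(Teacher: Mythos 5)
Your proposal is correct and follows essentially the same route as the paper: the paper also truncates to $[-N,N]$, applies inequality $(2)$ to the compactly supported differences $f_{n,N}-f_N$, and lets $N\to\infty$ using the fact that $X(g)$ is by definition the $\|\cdot\|_2$-limit of $X(g\chi_{[-N,N]})$. You merely package the truncation step as an explicit extension of $(2)$ to all of $L^1(\mathbb{R})\cap L^2(\mathbb{R})$ before invoking linearity, which is a cosmetic reorganization of the same argument.
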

\begin{proof}
For every $N\in \mathbb{N}$, we have
$$\lim_{n\rightarrow\infty}(\int_{[-N,N]}|f_n(t)-f(t)|+|f_n(t)-f(t)|^2)dt\le \lim_{n\rightarrow\infty}(\int_{\mathbb{R}}|f_n(t)-f(t)|+|f_n(t)-f(t)|^2)dt=0.$$
For every $\varepsilon>0$, there exists a number $n_0\in \mathbb{N}$ such that $$\|f_{n,N}-f_N\|_1+\|f_{n,N}-f_N\|_2<\varepsilon,\forall n\ge n_0, N\in \mathbb{N}.$$ Let $f_{n,N}=\chi_{[-N,N]}(t)f_n(t)$ and $f_{N}(t)=\chi_{[-N,N]}(t)f(t)$. By (2), we have
$$\|X(f_{n, N})-X(f_N)\|_{L^2(\A, \varphi)}\le \|f_{n,N}-f_N\|_1+\|f_{n,N}-f_N\|_2<\varepsilon,  \forall n\ge n_0, N\in \mathbb{N}.$$
Hence, for $\varepsilon>0$, we have  $$\|X(f_n)-X(f)\|_{L^2(\A,\varphi)}=\lim_{N\rightarrow \infty}\|X(f_{n, N})-X(f_N)\|_{L^2(\A, \varphi)}\le \varepsilon, \forall n\ge n_0.$$
It follows that $\lim_{n\rightarrow \infty}\|X(f_n)-X(f)\|_{L^2(\A, \varphi)}=0.$
\end{proof}
\begin{Remark}
\begin{enumerate}
\item Let $a$ be a self-adjoint operator in a $W^*$-probability space $(\mathcal{A},\varphi)$ with a free Poisson distribution $\kappa_n(a)=\lambda>0, n=1, 2, \cdots$. If $0<\lambda<1$, by Remark 4.3 and Section 4.2 in \cite{MA}, we can choose a positive element $sps$ having the same distribution, where $s$ is a self-adjoint operator with the standard semicircle distribution, and $p$ is a projection in $\mathcal{A}$ such that $s$ and $p$ are freely independent, and $\varphi(p)=\lambda$. Moreover, for a general $\lambda>0$, by the construction in section 4.2 in \cite{MA}, we still can choose a positive operator $a'\in \mathcal{A}$ such that $a'$ and $a$ have the same distribution.
\item We know that if $\kappa_m(a)>0, \forall m\ge 1$, then $\varphi (a^m)>0, \forall m\ge 1$ ($(11.6)$ and $(11.8)$ in \cite{NS}). But that having all positive moments doesn't mean that $a\in \A$ is positive. For instance, let $\A=L^\infty ([0,1],dx)$, $E_1=[0,1/3], E_2=[2/3,1]$, and $f=2\chi_{E_1}-\chi_{E_2}\in \A$ is not non-negative. But $\int_0^1f(x)^ndx=1/3(2^n+(-1)^n)>0,\forall n=1, 2, \cdots.$
\end{enumerate}
\end{Remark}

\begin{Theorem} If $\{X_E:E\in \mathcal{B}_0\}$ is a free Poisson random measure and $X_E\ge 0,\forall E\in \mathcal{B}_0$, then the integration with respect to the random measure is a contractive mapping from $L^1_\mathbb{R}(\mathbb{R})$ into $L^1(\mathcal{A},\varphi)$, where $L^1_\mathbb{R}(\mathbb{R})$ is the space of all real-valued $L^1$-functions on $\mathbb{R}$.
\end{Theorem}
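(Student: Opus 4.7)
The plan is to establish the bound first for real-valued simple functions by a positive/negative-part decomposition that uses the positivity hypothesis $X_E\ge 0$, and then extend by density to all of $L^1_{\mathbb R}(\mathbb R)$.

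For a real-valued simple function $s=\sum_{i=1}^n c_i\chi_{E_i}$ with $E_i$ mutually disjoint of finite measure, I would split $s=s_+-s_-$ into positive and negative parts by grouping indices according to the sign of $c_i$, i.e. $s_+=\sum_{c_i>0}c_i\chi_{E_i}$ and $s_-=\sum_{c_i<0}(-c_i)\chi_{E_i}$. The key observation is that, because each $X_{E_i}\ge 0$ and the coefficients are non-negative, both operators $X(s_+)$ and $X(s_-)$ are positive elements of $\mathcal A$. Hence $\|X(s_\pm)\|_1=\varphi(X(s_\pm))=\|s_\pm\|_1$, and the triangle inequality in $L^1(\mathcal A,\varphi)$ gives
$$\|X(s)\|_1 \le \|X(s_+)\|_1+\|X(s_-)\|_1 = \|s_+\|_1+\|s_-\|_1=\|s\|_1.$$

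Next, for general $f\in L^1_{\mathbb R}(\mathbb R)$, I would choose real-valued simple functions $s_n$ (with finite-measure support) such that $\|s_n-f\|_1\to 0$. The contractive estimate just established shows $\|X(s_n)-X(s_m)\|_1\le\|s_n-s_m\|_1\to 0$, so $\{X(s_n)\}$ is Cauchy in $L^1(\mathcal A,\varphi)$ and converges to some $Y$; set $X(f):=Y$. Independence from the approximating sequence follows from the same inequality, and passing to the limit yields $\|X(f)\|_1\le\|f\|_1$. Finally, to check consistency with the previous definition (Theorem 5.4) on $L^1\cap L^2$, one can pick $s_n$ converging to $f$ simultaneously in $\|\cdot\|_1$ and $\|\cdot\|_2$ (e.g.\ truncate and mollify); then $X(s_n)\to X(f)$ in $\|\cdot\|_2$, and because $\|\cdot\|_1\le\|\cdot\|_2$ in a tracial $W^*$-probability space, the two candidate definitions agree.

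The only delicate point is Step~1, where positivity of $X_E$ is essential: it is the ingredient that identifies $\|X(s_\pm)\|_1$ with $\varphi(X(s_\pm))$ and thereby replaces a potentially much larger operator-theoretic $L^1$ norm of $X(s_+)-X(s_-)$ with the scalar integral $\|s\|_1$. Without this hypothesis the triangle inequality route would break down, which explains why contractivity is asserted only in the positive case.
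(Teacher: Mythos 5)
Your proposal is correct and follows essentially the same route as the paper: the key estimate $\|X(s)\|_1\le\|s\|_1$ for simple $s$ rests on the triangle inequality for the trace norm together with $\varphi(|X_{E_i}|)=\varphi(X_{E_i})=|E_i|$ from positivity (the paper applies this term by term rather than grouping by sign, but that is the identical mechanism), followed by the same density extension to $L^1_{\mathbb R}(\mathbb R)$. Your added consistency check with the $L^2$-based definition on $L^1\cap L^2$ is a reasonable extra remark that the paper omits.
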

\begin{proof} Let $s=\sum_{i=1}^nc_i\chi_{E_i}$ be a simple function with finite support $|\cup_{i=1}^nE_n|=\sum_{i=1}^n|E_i|<\infty$. By the definition, $X(s)=\sum_{i=1}^nc_iX_{E_i}$. Thus, $$\varphi(|X(s)|)\le \sum_{i=1}^n|c_i|\varphi (X_{E_i})=\sum_{i=1}^n|c_i||E_i|=\|s\|_1.\eqno (4)$$
Let $f\in L^1_\mathbb{R}(\mathbb{R})$, and choose a sequence $\{s_n:n\ge 1\}$ of simple functions such that $f=\lim_{n\rightarrow \infty}s_n$ a. e. and in $L^1$. Then, by (4), we define $$X(f)=\int_{\mathbb{R}}f(x)X(dx):=\lim_{n\rightarrow \infty}X(s_n),$$ where the limit is taken in $L^1(\mathcal{A},\varphi)$. Moreover, $$\|X(f)\|_1=\varphi(|X(f)|)=\lim_{n\rightarrow\infty}\varphi(|X(s_n)|)\le \lim_{n\rightarrow\infty}\|s_n\|_1=\|f\|_1.$$
\end{proof}
When we consider real-valued functions in $L^1(\mathbb{R})$, the integration operator $X:L^1(\mathbb{R})\rightarrow L^1(\mathcal{A},\varphi)$ has the following monotonic property.
\begin{Proposition} Suppose that $X_E\ge 0,\forall E\in \mathcal{B}_0$.  If real-valued functions $f\le g$, and $f, g\in L^1(\mathbb{R})$, then $X(f)\le X(g)$ in $L^1(\mathcal{A},\varphi)$.
\end{Proposition}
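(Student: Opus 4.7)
The plan is to reduce the inequality to a positivity statement and then combine the linearity of $X$ with an approximation by simple functions. By Proposition 5.3 applied with $\alpha=1$, $\beta=-1$, we have $X(g)-X(f)=X(g-f)$, so setting $h:=g-f\in L^1_\mathbb{R}(\mathbb{R})$ with $h\ge 0$, it is enough to prove that $X(h)\ge 0$ in $L^1(\A,\varphi)$.

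For non-negative simple functions this is immediate from the definition: if $s=\sum_{i=1}^n c_i\chi_{E_i}$ with $c_i\ge 0$ and the $E_i\in \mathcal{B}_0$ mutually disjoint, then $X(s)=\sum_{i=1}^n c_i X_{E_i}$ is a non-negative linear combination of the non-negative operators $X_{E_i}$ (using the standing hypothesis $X_E\ge 0$), and so lies in $\A_+$. For a general non-negative $h\in L^1_\mathbb{R}(\mathbb{R})$ I would pick a sequence $\{s_n\}$ of non-negative simple functions with $\|s_n-h\|_1\to 0$ (for instance the standard dyadic truncation $s_n=\sum_{k=0}^{n2^n-1}\frac{k}{2^n}\chi_{\{k/2^n\le h<(k+1)/2^n\}\cap [-n,n]}$); then by the construction in the proof of Theorem 5.7, $X(s_n)\to X(h)$ in $L^1(\A,\varphi)$.

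The main technical point, and the one I would flag as the only non-formal step, is that the positive cone $L^1(\A,\varphi)_+$ is closed in the $\|\cdot\|_1$-norm. This is standard for a finite von Neumann algebra $\A$ with faithful normal tracial state $\varphi$: if $a_n\ge 0$ and $a_n\to a$ in $L^1(\A,\varphi)$, then for every $x\in \A$ one has $\varphi(x^*a_n x)\ge 0$ and $|\varphi(x^*(a_n-a)x)|\le \|x\|^2\|a_n-a\|_1\to 0$, so $\varphi(x^*ax)\ge 0$ for all $x\in \A$, whence $a\ge 0$ by faithfulness of $\varphi$. Applying this to the simple approximations $X(s_n)\ge 0$ from the previous paragraph yields $X(h)\ge 0$, which via the initial reduction gives $X(f)\le X(g)$ in $L^1(\A,\varphi)$, as required.
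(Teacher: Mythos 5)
The paper states this proposition with no proof at all, so there is nothing to compare against; your argument supplies the missing one, and it is correct. The three ingredients are exactly the right ones: (i) reduce to $X(h)\ge 0$ for $h=g-f\ge 0$ via linearity (note that Proposition 5.3 as stated concerns the $L^2$-integral, whereas here $X(f),X(g)$ are the $L^1$-extensions of Theorem 5.7; linearity of that extension follows by the same limiting argument from inequality (4), so this is only a matter of citation, not of substance); (ii) observe that $X(s)\ge 0$ for non-negative simple $s$ because it is a non-negative combination of the non-negative operators $X_{E_i}$, and that a non-negative $h\in L^1_{\mathbb{R}}(\mathbb{R})$ admits non-negative simple approximants in $L^1$, so that $X(s_n)\to X(h)$ in $L^1(\A,\varphi)$ by the construction in Theorem 5.7 (the limit being independent of the approximating sequence by (4)); (iii) the $\|\cdot\|_1$-closedness of the positive cone of $L^1(\A,\varphi)$. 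You are right to flag (iii) as the only non-formal step, and your verification is sound: traciality and the H\"older inequality give $|\varphi(x^*(a_n-a)x)|=|\varphi((a_n-a)xx^*)|\le \|a_n-a\|_1\|x\|^2$, and $\varphi(x^*ax)\ge 0$ for all $x\in\A$ forces the negative part of the self-adjoint limit $a$ to vanish by faithfulness of $\varphi$. This closedness is standard for a finite von Neumann algebra with faithful normal trace, and it is presumably what the authors had in mind in omitting the proof; your write-up makes the proposition genuinely proved rather than merely asserted.
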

To study the distribution of $X(f)$, we focus on real-valued functions in $L^{\infty -}:=\bigcap_{n\ge 1} L^n(\mathbb{R})$.
\begin{Theorem} Let $f\in L^{\infty -}$ be a real-valued function. If $X_E\ge 0,\forall E\in \mathcal{B}_0$, then $X(f)\in \bigcap_{n=1}^\infty L^n(\mathcal{A},\varphi)$, and $X(f)$ has a compound free Poisson distribution: $$\kappa_m(X(f))=\int_\mathbb{R}f(x)^mdx, m=1, 2, \cdots.$$
\end{Theorem}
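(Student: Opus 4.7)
The plan is to establish the cumulant identity first for simple functions, use the moment-cumulant formula to obtain a uniform moment bound, and then extend to $f\in L^{\infty-}$ by simultaneous approximation in every $L^p$-norm. For a simple function $s=\sum_{i=1}^n c_i\chi_{E_i}$ with $E_i$ pairwise disjoint of finite measure, property (2) of Definition 5.1 gives joint free independence of $X_{E_1},\ldots,X_{E_n}$, so Theorem 1.1 together with multilinearity of free cumulants yields
$$\kappa_m(X(s))=\sum_{i=1}^n c_i^m\,\kappa_m(X_{E_i})=\sum_{i=1}^n c_i^m|E_i|=\int_\mathbb{R}s(x)^m\,dx,$$
which is the desired formula in the simple case.

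Next I would derive the bound
$$\varphi(X(s)^{2k})=\sum_{\pi\in NC(2k)}\prod_{V\in\pi}\int_\mathbb{R}s^{|V|}\,dx\;\le\;\sum_{\pi\in NC(2k)}\prod_{V\in\pi}\|s\|_{|V|}^{|V|}$$
from the moment-cumulant formula applied to $X(s)$; the inequality uses that $\varphi(X(s)^{2k})=\|X(s)\|_{2k}^{2k}\ge 0$ together with $|\int s^j\,dx|\le\|s\|_j^j$ and the triangle inequality. For $f\in L^{\infty-}$, choose by standard truncation in range and domain followed by uniform approximation and a diagonal selection a sequence $\{s_n\}$ of simple functions of finite support with $\|s_n-f\|_p\to 0$ for every $p\ge 1$. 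Applying the bound above to $s_n-s_m$ shows that $\{X(s_n)\}$ is Cauchy in $\|\cdot\|_{2k}$ for every $k$, since $NC(2k)$ is finite and each $\|s_n-s_m\|_j\to 0$ for $j\le 2k$. Because $\varphi$ is a faithful normal tracial state on a finite von Neumann algebra, $\|\cdot\|_p\le\|\cdot\|_q$ for $p\le q$, so $\{X(s_n)\}$ is Cauchy in every $L^p(\A,\varphi)$; the common limit $X(f)$ therefore lies in $\bigcap_{n\ge 1}L^n(\A,\varphi)$, and consistency with the $L^2$-integral from Theorem 5.4 follows from uniqueness of $L^2$-limits.

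Since $X(s_n)\to X(f)$ in $L^m$, we get $\varphi(X(s_n)^m)\to\varphi(X(f)^m)$, and Möbius inversion of the moment-cumulant formula expresses $\kappa_m$ as a polynomial in the first $m$ moments, so $\kappa_m(X(s_n))\to\kappa_m(X(f))$. The simple-function computation gives $\kappa_m(X(s_n))=\int_\mathbb{R} s_n^m\,dx\to\int_\mathbb{R} f^m\,dx$ because $s_n\to f$ in $L^m$, yielding $\kappa_m(X(f))=\int_\mathbb{R}f(x)^m\,dx$ as claimed. The main obstacle is promoting the existing $L^2$-construction of $X(f)$ to convergence in every $L^p$, which is what licenses passing moments of all orders to the limit; the cumulant-based moment bound of the second step is the crucial ingredient, since it reduces operator $L^p$-norms to an explicit polynomial in the scalar $L^j$-norms of $s$ and thereby bypasses the need for any isometric embedding of $L^p(\mathbb{R})$ into $L^p(\A,\varphi)$, which is unavailable in this non-centered setting.
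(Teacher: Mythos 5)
Your proof is correct and takes a genuinely different route from the paper's. The paper first reduces to $f\ge 0$, approximates by simple functions $0\le s_n\le f$, and controls $\varphi(|X(s_{n_1})-X(s_{n_2})|^m)$ by identifying the cumulants of the increment with $\|s_{n_1}-s_{n_2}\|_m^m$ --- a step that leans on the hypothesis $X_E\ge 0$ (and, as written, conflates $|X(s)|$ with $X(|s|)$, which strictly requires the positive summands to be orthogonal rather than merely free); it then handles general $f$ via $f=f^+-f^-$, using that $X(s_n^+)$ and $X(s_n^-)$ are freely independent because their supports are disjoint. Your argument replaces all of this with the single even-moment bound $\varphi(X(s)^{2k})=\sum_{\pi\in NC(2k)}\prod_{V\in\pi}\int s^{|V|}\le\sum_{\pi}\prod_{V}\|s\|_{|V|}^{|V|}$, applied to $s_n-s_m$, which gives Cauchyness of $X(s_n)$ in every $L^{2k}(\A,\varphi)$ directly; this is cleaner, treats real-valued $f$ in one stroke, and in fact never uses $X_E\ge 0$, so it proves a slightly stronger statement than the one asserted. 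Two small points to tighten: the inference $\varphi(X(s_n)^m)\to\varphi(X(f)^m)$ needs more than convergence in $L^m$ alone --- you should say explicitly that it follows from H\"older together with the uniform boundedness of $X(s_n)$ in every $L^p$, which your Cauchy estimate does supply; and, like the paper (which waves at Exercise 16.21 of \cite{NS} here), you stop at the cumulant formula $\kappa_m(X(f))=\int f^m$ without exhibiting the rate $\lambda$ and compactly supported jump distribution $\nu$ with $\kappa_m=\lambda m_m(\nu)$ demanded by Definition 3.4, which genuinely requires an extra remark (e.g. taking $\lambda=|\mathrm{supp}(f)|$ and $\nu$ the push-forward of normalized Lebesgue measure under $f$, with some care about boundedness of $f$).
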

\begin{proof}
Suppose first that $f\ge 0$. Then there exists a sequence $\{s_n:n\ge 1\}$ of simple functions such that $s_n\rightarrow f$ a. e., as $n\rightarrow \infty$, and $0\le s_n(x)\le f(x)$ a. e.. By Lebesgue's dominated convergence theorem, $\lim_{n\rightarrow \infty}s_n=f$ in $L^m$, for all $m\ge 1$. It implies that $$\lim_{n\rightarrow \infty}\int_{\mathbb{R}}s_n(x)^mdx=\int_{\mathbb{R}}f(x)^mdx, m=1,2,\cdots.\eqno (5)$$

On the other hand, for a simple function $s=\sum_{i=1}^nc_i\chi_{E_i}$, $X(s)=\sum_{i=1}^nc_iX_{E_i}$. It follows that $$\kappa_m(X(s))=\kappa_m(\sum_{i=1}^nc_iX_{E_i})=\sum_{i=1}^mc_i^m\kappa_m(X_{E_i})=\sum_{i=1}^nc_i^m|E_i|=\int_{\mathbb{R}}s^m(x)dx.\eqno (6)$$
By (5) and (6), we have
$$\lim_{n\rightarrow \infty}\kappa_m(X(s_n))=\int_{\mathbb{R}}f(x)^mdx, m=1,2,\cdots.\eqno (7)$$
Moreover, for a real-valued simple function
$s=\sum_{i=1}^nc_i\chi_{E_i}$, $|s(x)|=\sum_{i=1}^n|c_i|\chi_{E_i}$.
Since $\lim_{n\rightarrow \infty}s_n=f$ in $L^m$, for all $m\ge 1$,
we have $\int_{\mathbb{R}}|s_{n_1}-s_{n_2}|^m(x)dx\rightarrow
\infty$, as $n_1, n_2\rightarrow \infty$, for all $m\ge 1$. Let
$s_{n_1}-s_{n_2}=\sum_{i=1}^kd_i\chi_{F_i}$, we have
$$\int_{\mathbb{R}}|s_{n_1}-s_{n_2}|^m(x)dx=\sum_{i=1}^k|c_i|^m|E_i|\rightarrow
0,$$ as $n_1, n_2\rightarrow \infty$, for all $m\ge 1$. Therefore,
by (6) we have
$$\kappa_m(|X(s_{n_1}-s_{n_2})|)=\sum_{i=1}^k|c_i|^m|E_i|=\|s_{n_1}-s_{n_2}\|_m^m\rightarrow 0, $$ as, $n_1, n_2\rightarrow \infty$, for all $m\ge 1$.
By the moment-cumulant formals (11.7) and (11.8) in \cite{NS}, we have
 $$\varphi(|X(s_{n_1})-X(s_{n_2})|^m)\rightarrow 0,$$ as $n_1, n_2\rightarrow \infty$, for all $m\ge 1$.
 Note that $0\le X(s_n)\in\A$. Thus, there is a positive operator $X(f)\in L^{\infty -}:=\bigcap_{n=1}^\infty L^n(\A,\varphi)$ such that
$\varphi(X(f)^m)=\lim_{n\rightarrow \infty}\varphi(X(s_n)^m)$. It implies from (7) that $$\kappa_m(X(f))=\lim_{n\rightarrow\infty}\kappa_m(X(s_n))=\int_{\mathbb{R}}f(x)^mdx, m=1,2,\cdots.$$
By Exercise 16.21 in \cite{NS}, there is a non-commutative probability space $(\mathcal{D}, \psi)$ and an element $d\in \mathcal{D}$ such that $\psi(d^m)=\int_{\mathbb{R}}f(x)^mdx, m=1,2,\cdots.$ Then we have $$\kappa_m(X(f))=\psi(d^m), m=1, 2, \cdots,$$
that is, $X(f)$ has a compound Poisson distribution with $\lambda=1$ and measure $\nu$, where $m_n(\nu)=\psi (d^n), n=1, 2, \cdots$.

For a general real-valued $f\in L^{\infty -}$, let $f=f^+-f^-$. Then $|f|=f^++f^-$. We have $X(f)=X(f^+)-X(f^-)\in L^{\infty -}(\A,\varphi)$. Let simple functions $s_n^+\rightarrow f^+$ and $s_n^-\rightarrow f^-$ a. e.. Then $X(s_n^+)$ and $X(s_n^-)$ are freely independent, since $supp(s_n^+)\subset supp(f^+)$ and $supp(s_n^-)\subset supp(f^-)$ are disjoint. It follows that
\begin{align*}
\kappa_m(X(f))&=\kappa_m(X(f^+)-X(f^-))=\lim_{n\rightarrow \infty}\kappa_m(X(s_n^+)-X(s_n^-))\\
=&\lim_{n\rightarrow\infty}\kappa_m(X(s_n^+))+(-1)^m\lim_{n\rightarrow\infty}\kappa_m(X(s_n^-))\\
=&\int_{\mathbb{R}}f^+(x)^mdx+(-1)^m\int_{\mathbb{R}}f^-(x)^mdx=\int_{\mathbb{R}}f(x)^mdx.
\end{align*}
It implies that $X(f)$ has a compound free Poisson distribution $$\kappa_m(X(f))=\int_\mathbb{R}f(x)^mdx, m=1, 2, \cdots.$$
\end{proof}
\section{Centered Free Poisson Measures}

A random variable $a$ in a non-commutative probability space $(\A,\varphi)$ is {\sl centered} if $\varphi (a)=0$. For a random variable $a\in \A$, $\widetilde{a}:=a-\varphi (a)$ is always centered.

\begin{Definition}
\begin{enumerate}
\item If a random variable $a\in (\A,\varphi)$ has a  free Poisson distribution $\kappa_n(a)=\lambda\alpha^n, n\ge 1$, then we say $\widetilde{a}=a-\varphi(a)$ has a centered free Poisson distribution, that is, $\kappa_n(\widetilde{a})=\lambda\alpha^n, n\ge 2$, and $\kappa_1(\widetilde{a})=0$.
\item If $X_E, E\in \mathcal{B}_0$, is a free Poisson random measure, we called $\widetilde{X}_E=X_E-|E|, E\in \mathcal{B}_0$, a centered free Poisson random measure.
\end{enumerate}
\end{Definition}

\begin{Proposition} A random measure  $\widetilde{X}_E, E\in \mathcal{B}_0$, is a centered free Poisson random measure, if and only if
\begin{enumerate}
\item $\widetilde{X}_E$ has a centered free Poisson distribution with parameter $|E|$, for a set $E\in \mathcal{B}_0$.
\item If $E_1, E_2, \cdots, E_k$ are mutually disjoint, then $\widetilde{X}_{E_1}, \widetilde{X}_{E_2}, \cdots, \widetilde{X}_{E_k}$ are freely independent.
\end{enumerate}
\end{Proposition}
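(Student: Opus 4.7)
The plan is to deduce both directions of the biconditional from the translation-invariance of higher free cumulants: for any $a \in \A$ and $c \in \mathbb{R}$, $\kappa_1(a + c \cdot 1) = \kappa_1(a) + c$ while $\kappa_n(a + c \cdot 1) = \kappa_n(a)$ for every $n \geq 2$, which follows from the multilinearity of $\kappa_n$ and the vanishing of mixed cumulants involving the unit (both recorded in the preliminaries and in \cite{NS}). Coupled with the observation that $a$ and $a + c \cdot 1$ generate the same unital subalgebra of $\A$, so that free independence is preserved under such scalar shifts, this is essentially all one needs.

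For the forward direction, I would use Definition 6.1 to write $\widetilde{X}_E = X_E - |E| \cdot 1$ for a free Poisson random measure $X_E$. Since $\kappa_n(X_E) = |E|$ for every $n \geq 1$ (with $\alpha = 1$), the translation formula immediately gives $\kappa_1(\widetilde{X}_E) = 0$ and $\kappa_n(\widetilde{X}_E) = |E|$ for $n \geq 2$, which is condition (1). Condition (2) follows because each $\widetilde{X}_{E_i}$ lies in the unital subalgebra generated by $X_{E_i}$, so the free independence of $X_{E_1}, \ldots, X_{E_k}$ on disjoint sets descends to $\widetilde{X}_{E_1}, \ldots, \widetilde{X}_{E_k}$. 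For the reverse direction, I would set $X_E := \widetilde{X}_E + |E| \cdot 1$ and verify the three clauses of Definition 5.1: the cumulant translation reverses to give $\kappa_n(X_E) = |E|$ for all $n \geq 1$, so $X_E$ has a free Poisson distribution of the right parameters; free independence of $\{X_{E_i}\}$ on disjoint families is the same subalgebra observation applied in the other direction; and finite additivity follows by the one-line computation $X_{\cup_i E_i} = \widetilde{X}_{\cup_i E_i} + \sum_i |E_i| \cdot 1 = \sum_i X_{E_i}$, reading ``random measure'' in the hypothesis as carrying the additivity of $\widetilde{X}$ on disjoint unions.

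The only real subtlety I foresee is that the stated proposition lists only (1) and (2) for the reverse direction, whereas Definition 5.1 also requires additivity. I would resolve this by treating additivity as a standing hypothesis implicit in the phrase ``random measure,'' so that the linear computation above transfers it from $\widetilde{X}$ to $X$. No limiting or analytic input is required, so apart from this bookkeeping the proof is entirely algebraic, resting only on the two facts recalled in the opening paragraph.
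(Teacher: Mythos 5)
Your proposal is correct and follows essentially the same route as the paper: both arguments rest on the vanishing of free cumulants with a scalar entry (Proposition 11.15 in \cite{NS}) to shift the cumulants by $|E|\cdot 1$, and on the observation that freeness is unaffected by adding scalars, with the converse obtained by setting $X_E=\widetilde{X}_E+|E|\cdot 1$. Your explicit handling of the additivity clause of Definition 5.1 in the reverse direction is in fact slightly more careful than the paper's proof, which passes over that point silently.
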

\begin{proof}
Suppose that $\widetilde{X}_E=X_E-|E|$. Then by Proposition 11.15 in \cite{NS}, for $a_1, a_2, \cdots, a_m\in \A, m\ge 2$,  we have $\kappa_m(a_1, a_2, \cdots, a_m)=0$, if there is at least one scalar element $a_i=\alpha I$, where $\alpha$ is a constant, and $I$ is the unit in $A$. It follows that, for $m\ge 2, m\in \mathbb{N}$,
$$\kappa_m(\widetilde{X}_E)=\kappa_m(X_E-|E|,\widetilde{X}_E, \cdots, \widetilde{X}_E)=\kappa_m(X_E,\widetilde{X}_E,\cdots,\widetilde{X}_E)=\cdots=\kappa_m(X_E)=|E|.$$
It is obvious that $\{\widetilde{X}_{E_1}, \cdots, \widetilde{X}_{E_n}\}$ is a free family if $E_1, E_2, \cdots, E_n$ are mutually disjoint.

Conversely, if $\widetilde{X}_E$ satisfies the two conditions, let $X_E=\widetilde{X}_E+|E|$. By the above discussion, $X_E$ is a free Poisson random measure.
\end{proof}
Let $s=\sum_{i=1}^nc_i\chi_{E_i}$ with $|E_i|<\infty$. Define $\widetilde{X}(s)=\int_{\mathbb{R}}s(x)\widetilde{X}(dx):=\sum_{i=1}^nc_i\widetilde{X}_{E_i}$.
\begin{Lemma}$\|\widetilde{X}(s)\|_2^2=\|s\|_2^2.$
\end{Lemma}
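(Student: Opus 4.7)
The plan is to reduce $\|\widetilde{X}(s)\|_2^2=\varphi(\widetilde{X}(s)^*\widetilde{X}(s))$ to a double sum and then exploit the two defining properties of a centered free Poisson random measure recorded in Proposition 6.2. Without loss of generality I assume the sets $E_1,\dots,E_n$ are pairwise disjoint (which is the standard canonical form for a simple function, and is the hypothesis used throughout the preceding section). Since each $\widetilde{X}_{E_i}=X_{E_i}-|E_i|\cdot 1$ is self-adjoint, expanding gives
$$\varphi\bigl(\widetilde{X}(s)^*\widetilde{X}(s)\bigr)=\sum_{i,j=1}^{n}\overline{c_i}c_j\,\varphi\bigl(\widetilde{X}_{E_i}\widetilde{X}_{E_j}\bigr).$$
The remaining task is to evaluate the diagonal and off-diagonal $\varphi(\widetilde{X}_{E_i}\widetilde{X}_{E_j})$ separately.

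For the off-diagonal terms ($i\neq j$), since $E_i\cap E_j=\emptyset$, Proposition 6.2(2) gives that $\widetilde{X}_{E_i}$ and $\widetilde{X}_{E_j}$ are freely independent; combined with the fact that both variables are centered ($\kappa_1(\widetilde{X}_{E_k})=0$, which is part of Proposition 6.2(1)), the freeness relation forces $\varphi(\widetilde{X}_{E_i}\widetilde{X}_{E_j})=\varphi(\widetilde{X}_{E_i})\varphi(\widetilde{X}_{E_j})=0$. For the diagonal terms ($i=j$), I invoke the second-order moment-cumulant formula $\varphi(a^2)=\kappa_2(a)+\kappa_1(a)^2$; applied to $a=\widetilde{X}_{E_i}$ and using $\kappa_1(\widetilde{X}_{E_i})=0$ and $\kappa_2(\widetilde{X}_{E_i})=|E_i|$ from Proposition 6.2(1), this yields $\varphi(\widetilde{X}_{E_i}^2)=|E_i|$.

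Substituting these two evaluations back into the double sum collapses everything to
$$\|\widetilde{X}(s)\|_2^2=\sum_{i=1}^{n}|c_i|^2\,|E_i|=\|s\|_2^2,$$
which is the claimed isometry on simple functions. There is no real obstacle here: the argument is a direct computation once one observes that centering kills both the first cumulant (hence the constant term of $\varphi(a^2)=\kappa_2(a)+\kappa_1(a)^2$) and, together with freeness, all the off-diagonal cross moments. The only point worth flagging is the implicit assumption that the canonical simple-function representation has disjoint supports; if the author wishes to state the lemma without this assumption, one first refines the partition generated by $\{E_i\}$ into disjoint atoms and rewrites $s$ on that finer partition, after which the argument proceeds verbatim.
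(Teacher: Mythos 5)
Your proof is correct and follows essentially the same route as the paper's: expand $\varphi(\widetilde{X}(s)^*\widetilde{X}(s))$ into a double sum, kill the off-diagonal terms via freeness of $\widetilde{X}_{E_i},\widetilde{X}_{E_j}$ together with centering, and evaluate the diagonal terms by $\varphi(\widetilde{X}_{E_i}^2)=\kappa_2(\widetilde{X}_{E_i})+\varphi(\widetilde{X}_{E_i})^2=|E_i|$. Your explicit remark about assuming the $E_i$ pairwise disjoint is a point the paper leaves implicit, but otherwise the two arguments coincide.
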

\begin{proof}
\begin{align*}
\|\widetilde{X}(s)\|_2^2&=\varphi (\widetilde{X}(s)^*\widetilde{X}(s))=\varphi(\sum_{i,j=1}^nc_i\overline{c_j}\varphi (\widetilde{X}_{E_i}\widetilde{X}_{E_j}))\\
=&\sum_{i=1}^n|c_i|^2\varphi(\widetilde{X}_{E_i}^2)=\sum_{i=1}^n|c_i|^2(\kappa_2(\widetilde{X}_{E_i})+\varphi (\widetilde{X}_{E_i})^2)\\
=&\sum_{i=1}^n|c_i|^2|E_i|=\|s\|_2^2.
\end{align*}
\end{proof}
From the above lemma, we can extend the integration to $L^2(\mathbb{R})$. Let $f$ be a real-valued functionin  $ L^2(\mathbb{R})$, and $\{s_n:n\ge 1\}$ be a sequence of simple functions such that $s_n\rightarrow f$ a. e. and in $\|\cdot\|_2$. Then define $\widetilde{X}(f)=\lim_{n\rightarrow \infty}\widetilde{X}(s_n)\in L^2(\A,\varphi)$, where the limit is taken with respect to  $\|\cdot\|_2$ of $L^2(\A,\varphi)$. The operator $\widetilde{X}:L^2(\mathbb{R})\rightarrow L^2(\A,\varphi)$ is isometric.
\begin{Lemma}
Let $s=\sum_{i=1}^nc_iE_i$ be a real-valued simple function, and $X_E\ge 0, \forall E\in \B_0$. Then $\|\widetilde{X}(s)\|_1\le 2\|s\|_1$.
\end{Lemma}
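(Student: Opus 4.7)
The plan is to reduce the claim to Theorem 5.7 (the $L^1$-contractivity of the uncentered integration operator $X$) via the triangle inequality. The key observation is that from the definition of $\widetilde{X}_E = X_E - |E|\cdot I$ one obtains, for the simple function $s = \sum_{i=1}^n c_i \chi_{E_i}$, the clean decomposition
\begin{equation*}
\widetilde{X}(s) \;=\; \sum_{i=1}^n c_i\bigl(X_{E_i}-|E_i|\cdot I\bigr) \;=\; X(s) \;-\; \Bigl(\int_{\mathbb{R}} s(x)\,dx\Bigr)\cdot I.
\end{equation*}

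From here I would apply the triangle inequality for $\|\cdot\|_1 = \varphi(|\cdot|)$ to obtain
\begin{equation*}
\|\widetilde{X}(s)\|_1 \;\le\; \|X(s)\|_1 \;+\; \Bigl|\int_{\mathbb{R}} s(x)\,dx\Bigr|\cdot \|I\|_1.
\end{equation*}
Since $\varphi$ is a state, $\|I\|_1 = \varphi(I) = 1$. The first summand is at most $\|s\|_1$ by Theorem 5.7 (which uses precisely the hypothesis $X_E \ge 0$ to conclude that the uncentered integration operator is an $L^1$-contraction), and the second summand is at most $\int_{\mathbb{R}}|s(x)|\,dx = \|s\|_1$ by the ordinary triangle inequality for the Lebesgue integral. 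Adding these gives $\|\widetilde{X}(s)\|_1 \le 2\|s\|_1$, as desired.

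There is essentially no obstacle here beyond bookkeeping: the scalar $\int s\,dx$ drops out of the non-commutative norm cleanly because $\varphi$ is tracial (in fact only unital is needed for $\|I\|_1 = 1$), and the positivity hypothesis $X_E \ge 0$ is used only through the already-established Theorem 5.7. The constant $2$ arises naturally as the sum of the contractivity constant for $X$ and the contractivity constant $|\int s\,dx| \le \|s\|_1$ for the scalar correction term, so no sharper bookkeeping is attempted at this stage.
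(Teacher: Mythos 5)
Your proof is correct and follows essentially the same route as the paper: the paper's one-line argument is exactly the triangle-inequality split $\varphi(|\widetilde{X}(s)|)\le\sum_i|c_i|\varphi(X_{E_i})+\sum_i|c_i||E_i|=2\|s\|_1$, where the first sum is the bound from (the proof of) Theorem 5.7 that you invoke. Packaging the first term as an appeal to Theorem 5.7 and the second as $|\int_{\mathbb{R}}s\,dx|\cdot\|I\|_1\le\|s\|_1$ is just a cleaner bookkeeping of the identical estimate.
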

\begin{proof}
$$\|\widetilde{X}(s)\|_1=\varphi(|\widetilde{X}(s)|)\le \sum_{i=1}^n|c_i|\varphi (X_{E_i})+\sum_{i=1}^n|c_i||E_i|=2\|s\|_1.$$
\end{proof}
Hence, we can extend the integration to $L^1(\mathbb{R})$. For a real-valued $f\in L^1(\mathbb{R})$, let $s_n$ be a real-valued simple functions such that $s_n\rightarrow f$ a. e. and in $L^1(\mathbb{R})$. Then we define $\widetilde{X}(f)=\lim_{n\rightarrow \infty}\widetilde{X}(s_n)\in L^1(\A,\varphi)$, where the limit is taken with respect to $\|\cdot\|_1$ of $L^1(\A,\varphi)$.

\end{document}